\newtheorem{theorem}{Theorem}[section]
\newtheorem{lemma}[theorem]{Lemma}
\newtheorem{proposition}[theorem]{Proposition}
\begin{document}

\title{On lattice tilings of $\mathbb{Z}^{n}$ by limited magnitude error balls $\mathcal{B}(n,2,1,1)$}

\author{Tao Zhang, Yanlu Lian and Gennian Ge
\thanks{The research of G. Ge was supported by the National Key Research and Development Program of China under Grant 2020YFA0712100 and Grant 2018YFA0704703, the National Natural Science Foundation of China under Grant 11971325 and Grant 12231014, and Beijing Scholars Program.}
\thanks{T. Zhang is with the Zhejiang Lab, Hangzhou 311100, China (e-mail: zhant220@163.com).}
\thanks{Y. Lian is with the School of Mathematics, Hangzhou Normal University, Hangzhou 311100, China (e-mail: yllian@hznu.edu.cn).}
\thanks{G. Ge is with the School of Mathematics Sciences, Capital Normal University, Beijing 100048, China (e-mail: gnge@zju.edu.cn).}
}

\maketitle
\begin{abstract}
Limited magnitude error model has applications in flash memory. In this model, a perfect code is equivalent to a tiling of $\mathbb{Z}^n$ by limited magnitude error balls. In this paper, we give a complete classification of lattice tilings of $\mathbb{Z}^n$ by limited magnitude error balls $\mathcal{B}(n,2,1,1)$.
\end{abstract}

\begin{IEEEkeywords}
Lattice tiling, limited magnitude errors, flash memory.
\end{IEEEkeywords}
\section{Introduction}
In the asymmetric limited magnitude error model, a symbol $a\in\mathbb{Z}$ may be modified to $b$ during transmission, and the error magnitude $|a-b|$ is likely to be bounded by certain threshold. One of applications of asymmetric limited magnitude error model is flash memory \cite{CSBB10}. Moreover, for an information $\bf{a}\in\mathbb{Z}^{n}$, a common noise affects only some of the entries. Hence, for integers $n\ge t\ge1$ and $k_{+}\ge k_{-}\ge0$, we define the $(n,t,k_{+},k_{-})$-error ball as
\[\mathcal{B}(n,t,k_{+},k_{-}):=\{{\bf{a}}=(a_1,a_2,\dots,a_n)\in\mathbb{Z}^n:\ a_i\in[-k_{-},k_{+}],\ \text{wt}({\bf{a}})\le t\},\]
where $\text{wt}({\bf{a}})$ denotes the Hamming weight of ${\bf{a}}$. Under this setting, it is easy to see that an error correcting code is equivalent to a packing of $\mathbb{Z}^n$ by $\mathcal{B}(n,t,k_{+},k_{-})$, and a perfect code is equivalent to a tiling of $\mathbb{Z}^{n}$ by $\mathcal{B}(n,t,k_{+},k_{-})$. Moreover, a linear perfect code is equivalent to a lattice tiling of $\mathbb{Z}^{n}$ by $\mathcal{B}(n,t,k_{+},k_{-})$.

When $n=t$, it is easy to see that $\mathcal{B}(n,t,k_{+},k_{-})$ is a cube $[-k_{+}-k_{-},k_{+}+k_{-}]^n$, hence we will only consider $1\le t\le n-1$.
Tilings (or packings) of $\mathbb{Z}^{n}$ by $\mathcal{B}(n,1,k_{+},k_{-})$ have been intensively studied in recent years due to their own interests and applications in flash memory. See \cite{HS86,KBE11,KLNY11,KLY12,S67,S84,SS94,S86,S87,T98,W95} for the researches on cross $\mathcal{B}(n,1,k,k)$ and semi-cross $\mathcal{B}(n,1,k,0)$. Later, these two shapes are extended to quasi-cross $\mathcal{B}(n,1,k_{+},k_{-})$ by Schwartz \cite{S12}, and immediately, quasi-cross was received a lot of attentions \cite{S14,XL20,XL21,YKB13,YZZG20,YZ20,ZG16,ZG18,ZZG17}. For $t\ge2$, there are only a few results. Tilings (or packings) of $\mathbb{Z}^{n}$ by $\mathcal{B}(n,n-1,k,0)$ are considered in \cite{BE13,KLNY11,S90}. In \cite{WS22,WWS21}, the authors studied the tiling and packing problem in the general case. In particular, Wei and Schwartz \cite{WS22} gave a complete classification of the lattice tilings with $\mathcal{B}(n,2,1,0)$ and $\mathcal{B}(n,2,2,0)$. Recently, Wei and Schwartz \cite{WS2023} considered perfect burst-correcting codes for the limited magnitude error channel.

The goal of this paper is to continue this research. Since the lattice tilings with $\mathcal{B}(n,2,1,0)$ and $\mathcal{B}(n,2,2,0)$ have been completely classified, and $\mathcal{B}(n,2,1,1)$ is the next open case. In this paper, we will solve this case. The main result of this paper is the following one.
\begin{theorem}\label{mainthm}
For $n\ge3$, there exists a lattice tiling of $\mathbb{Z}^n$ by $\mathcal{B}(n,2,1,1)$ if and only if $n=11$.
\end{theorem}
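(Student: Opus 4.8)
\emph{Setup.} The plan is to pass from lattices in $\mathbb{Z}^{n}$ to finite abelian groups. A lattice tiling of $\mathbb{Z}^{n}$ by $\mathcal{B}:=\mathcal{B}(n,2,1,1)$ is the same datum as a surjective group homomorphism $\phi:\mathbb{Z}^{n}\to G$ onto a finite abelian group $G$ with $|G|=|\mathcal{B}|$ for which $\phi|_{\mathcal{B}}$ is a bijection (take $G=\mathbb{Z}^{n}/\Lambda$). Counting gives $|\mathcal{B}|=1+2n+4\binom{n}{2}=2n^{2}+1$, an odd number, so $|G|=2n^{2}+1$. Writing $v_{i}:=\phi(e_{i})$, bijectivity of $\phi|_{\mathcal{B}}$ says precisely that the $2n^{2}+1$ elements $0$, $\pm v_{i}$ $(1\le i\le n)$, and $\epsilon v_{i}+\delta v_{j}$ $(1\le i<j\le n,\ \epsilon,\delta\in\{\pm1\})$ are pairwise distinct; as there are exactly $|G|$ of them, they then exhaust $G$. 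I will call such a tuple $(v_{1},\dots,v_{n})$ a \emph{splitting tuple} in $G$.

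\emph{Necessity.} I would first record the easy constraints forced by distinctness --- $v_{i}\neq0$, $2v_{i}\neq0$, $2(v_{i}\pm v_{j})\neq0$, and the absence of any coincidence among the $\pm v_i$ and the $\epsilon v_{i}+\delta v_{j}$ --- which already force the $v_{i}$ into general position. The engine is a character identity: for every nontrivial character $\chi$ of $G$ one has $\sum_{b\in\mathcal{B}}\chi(\phi(b))=\sum_{g\in G}\chi(g)=0$, and expanding the left side with $y_{i}:=\chi(v_{i})+\overline{\chi(v_{i})}$ gives
\[
\sum_{i=1}^{n}y_{i}^{2}=\Bigl(1+\sum_{i=1}^{n}y_{i}\Bigr)^{2}+1 .
\]
Taking $\chi$ of order $3$ (possible exactly when $3\mid 2n^{2}+1$, i.e. $n\not\equiv0\pmod3$) forces $y_{i}\in\{2,-1\}$, and the identity collapses to $2(n-1)=u(u+1)$ with $u=2n-3m$, $m=\#\{i:\chi(v_{i})\neq1\}$; hence $n-1$ must be a triangular number. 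For a character of odd prime order $p\mid 2n^{2}+1$, applying $\operatorname{Tr}_{\mathbb{Q}(\zeta_{p}+\zeta_{p}^{-1})/\mathbb{Q}}$ to the identity gives the integer equation
\[
\frac{2n^{2}+1}{p}=2c_{0}^{2}+\sum_{j\ge1}c_{j}^{2}-(n-c_{0})+1 ,
\]
where $c_{0}$ counts the indices $i$ with $\chi(v_{i})=1$ and $c_{j}$ counts those with $\chi(v_{i})\in\{\zeta_{p}^{\,j},\zeta_{p}^{-j}\}$; this is just one of the $(p-1)/2$ Galois-conjugate equations carried by the identity. The heart of the argument is to run these constraints over all prime divisors of $2n^{2}+1$, together with the fine combinatorial structure of a splitting tuple, and conclude that $2n^{2}+1$ must be a power of $3$. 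Finally $2n^{2}+1=3^{k}$ is a Ramanujan--Nagell type equation, solvable in $\mathbb{Z}[\sqrt{-2}]$ (using $3=(1+\sqrt{-2})(1-\sqrt{-2})$ and coprimality of the factors of $1+2n^{2}$), with only the solutions $(n,k)\in\{(1,1),(2,2),(11,5)\}$; since $n\ge3$, this leaves $n=11$.

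\emph{Sufficiency.} For $n=11$ the tiling is produced by the perfect $[11,6,5]$ ternary Golay code $\mathcal{C}\subseteq\mathbb{F}_{3}^{11}$. Fix a parity-check matrix $H\in\mathbb{F}_{3}^{5\times11}$ and define $\phi:\mathbb{Z}^{11}\to\mathbb{F}_{3}^{5}$ by $\phi(x)=H\,(x\bmod 3)^{\mathsf{T}}$; this is a surjective homomorphism and $|\mathbb{F}_{3}^{5}|=243=2\cdot11^{2}+1=|\mathcal{B}(11,2,1,1)|$. Identifying $2\in\mathbb{F}_{3}$ with $-1$, reduction mod $3$ sends $\mathcal{B}(11,2,1,1)$ bijectively onto the set of Hamming-weight-$\le2$ vectors of $\mathbb{F}_{3}^{11}$; since $\mathcal{C}$ is perfect with covering radius $2$, these are exactly the coset leaders of $\mathcal{C}$, so the syndrome map restricts to a bijection from them onto $\mathbb{F}_{3}^{5}$. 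Hence $\phi|_{\mathcal{B}(11,2,1,1)}$ is a bijection, so $\mathcal{B}(11,2,1,1)$ is a transversal of $\Lambda:=\ker\phi$, and $\Lambda$ gives the required lattice tiling.

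\emph{Main obstacle.} The difficulty is entirely in the necessity direction: the order-$3$ character alone only restricts $n$ to the sparse set $n\in\{1,2,4,7,11,16,22,\dots\}$ and gives nothing when $n\equiv0\pmod3$, so the real work is to upgrade the character/counting analysis --- ultimately controlling the Sylow structure of $G$ so as to rule out every prime divisor of $2n^{2}+1$ other than $3$ --- thereby eliminating the residual candidates $n\equiv0\pmod3$ together with $n=4,7,16,22,\dots$ and leaving only $n=11$.
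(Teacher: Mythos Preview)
Your sufficiency argument via the ternary Golay code is correct and pleasant; the paper simply cites the Wei--Schwartz construction, so yours is even more explicit.

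The necessity direction, however, is not a proof but a programme. You correctly derive the character identity $\sum_{i}y_{i}^{2}=(1+\sum_{i}y_{i})^{2}+1$ and the order-$3$ consequence that $n-1$ is triangular whenever $3\mid 2n^{2}+1$; this only restricts $n\not\equiv0\pmod 3$ to the infinite list $\{4,7,11,16,22,29,\dots\}$ and says nothing when $n\equiv0\pmod3$. The decisive assertion --- that the character/trace constraints over all prime divisors of $2n^{2}+1$, ``together with the fine combinatorial structure of a splitting tuple,'' force $2n^{2}+1$ to be a power of $3$ --- is stated but not argued. You yourself flag this as the ``main obstacle'' and the ``real work,'' and indeed nothing in the proposal shows how to exclude even a single prime $p\neq3$ from dividing $|G|$, nor how to eliminate $n=4,7,16,22,\dots$. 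The trace equation you write down for a character of prime order $p$ is a single scalar relation among the multiplicities $c_{j}$; it is far from clear that such relations, even taken jointly over the Galois orbit, are restrictive enough to pin down $|G|=3^{k}$, and you give no mechanism for it.

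For comparison, the paper does not attempt to control the prime factorisation of $|G|$ at all. It works entirely in the group ring $\mathbb{Z}[G]$: from $T^{2}=2G+T^{(2)}+(2n-2)e$ it expands $T^{(2)}T$, $TT^{(3)}$, $TT^{(4)}$, $TT^{(5)}$, analyses their coefficient multiplicities modulo $3$ and $5$ via inclusion--exclusion (the Leung--Zhou method), and splits into cases according to $n\bmod 3$ and $n\bmod 15$. Each residue class is killed by a delicate counting contradiction, with the sporadic leftovers $n=3,4,5$ handled by Kim's symmetric-polynomial/Newton-identity argument and $n=11$ surviving as the partial-difference-set case. None of this passes through a Ramanujan--Nagell equation or through showing $G$ is a $3$-group. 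If you want to pursue your route, you would need an entirely new idea to bridge the gap between the character identities and the conclusion $|G|=3^{k}$; as it stands, that bridge is missing.
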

This paper is organized as follows. In Section~\ref{pre}, we will give the group ring representation of lattice tilings of $\mathbb{Z}^n$ by $\mathcal{B}(n,2,1,1)$. In Section~\ref{mainsec}, we prove our main result.
\section{Preliminaries}\label{pre}
Let $\mathbb{Z}[G]$ denote the group ring of $G$ over $\mathbb{Z}$, where $G$ is a finite abelian group (written multiplicatively). For any $A\in\mathbb{Z}[G]$, $A$ can be written as formal sum $A=\sum_{g\in G}a_gg$, where $a_g\in\mathbb{Z}$. For any $A=\sum_{g\in G}a_gg$ and $t\in\mathbb{Z}$, we define
\[A^{(t)}=\sum_{g\in G}a_gg^t.\]
 Addition, subtraction, multiplication and scalar multiplication in group ring are defined as:
\[\sum_{g\in G}a_gg\pm\sum_{g\in G}b_gg=\sum_{g\in G}(a_g\pm b_g)g,\]
\[\sum_{g\in G}a_gg\sum_{g\in G}b_gg=\sum_{g\in G}(\sum_{h\in G}a_{h}b_{h^{-1}g})g,\]
and
\[\lambda\sum_{g\in G}a_gg=\sum_{g\in G}(\lambda a_g)g,\]
where $\lambda\in\mathbb{Z}$ and $\sum_{g\in G}a_gg,\sum_{g\in G}b_gg\in\mathbb{Z}[G]$. For any multiset $A$ of $G$, we can identify $A$ with the group ring element $\sum_{g\in G}a_gg$, where $a_g$ is the multiplicity of $g$ appearing in $A$. We also define $\text{supp}(A)$ to be the support of $A=\sum_{g\in G}a_gg$, i.e., $\text{supp}(A)=\{g\in G: a_g\ne0\}$. For any $A=\sum_{g\in G}a_gg\in\mathbb{Z}[G]$, we denote
\[A^{*}=\sum_{g\in G\backslash\{e\}}a_gg.\]

The following theorem establishes the connection between lattice tilings of $\mathbb{Z}^n$ and finite abelian groups.
\begin{theorem}{\rm{\cite{HA12}}}\label{tilinggroup}
Let $V$ be a subset of $\mathbb{Z}^{n}$. Then there is a lattice tiling $\mathcal{T}$ of $\mathbb{Z}^n$ by $V$ if and only if there are both an abelian group $G$ of order $|V|$ and a homomorphism $\phi:\mathbb{Z}^n\rightarrow G$ such that the restriction of $\phi$ to $V$ is a bijection.
\end{theorem}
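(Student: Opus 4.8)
The plan is to recognize that a lattice tiling of $\mathbb{Z}^n$ by $V$ is, by definition, a family of translates $\{V+\lambda:\lambda\in\Lambda\}$ by a sublattice (subgroup) $\Lambda\le\mathbb{Z}^n$ that partitions $\mathbb{Z}^n$, and to reformulate this partition condition purely group-theoretically: it holds exactly when $V$ is a complete set of coset representatives (a transversal) for $\Lambda$ in $\mathbb{Z}^n$. I would isolate this as a short lemma, since both directions of the equivalence then reduce to passing between a finite-index subgroup $\Lambda$ and the finite quotient $\mathbb{Z}^n/\Lambda$. The whole argument is a dictionary between the combinatorics of tilings and the algebra of quotients.

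For the forward direction, suppose $\{V+\lambda:\lambda\in\Lambda\}$ tiles $\mathbb{Z}^n$. First I would observe that a disjoint cover forces each $x\in\mathbb{Z}^n$ to have a unique decomposition $x=v+\lambda$ with $v\in V$ and $\lambda\in\Lambda$; since $\Lambda=-\Lambda$, this says precisely that every coset $x+\Lambda$ meets $V$ in exactly one point. As $V$ is finite, it follows that $\Lambda$ has finite index equal to $|V|$ and is therefore a full-rank lattice. Setting $G:=\mathbb{Z}^n/\Lambda$ and letting $\phi:\mathbb{Z}^n\to G$ be the canonical quotient homomorphism, the transversal property is exactly the statement that $\phi$ restricted to $V$ is a bijection onto $G$, and $|G|=[\mathbb{Z}^n:\Lambda]=|V|$, as required.

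For the converse, suppose $G$ is an abelian group with $|G|=|V|$ and $\phi:\mathbb{Z}^n\to G$ is a homomorphism whose restriction to $V$ is a bijection. Since $\phi|_V$ maps the $|V|=|G|$ elements of $V$ injectively into $G$, its image is all of $G$, so $\phi$ is surjective; by the first isomorphism theorem $\mathbb{Z}^n/\ker\phi\cong G$, whence $\Lambda:=\ker\phi$ has finite index $|G|=|V|$ and is a full-rank sublattice. The cosets of $\Lambda$ are exactly the fibers $\phi^{-1}(g)$ for $g\in G$, and the bijectivity of $\phi|_V$ guarantees that each such fiber contains exactly one element of $V$. Thus $V$ is a transversal for $\Lambda$, which by the lemma means $\{V+\lambda:\lambda\in\Lambda\}$ is a lattice tiling of $\mathbb{Z}^n$.

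There is no single deep obstacle here; the points requiring care are (i) the precise formalization of a lattice tiling as a transversal of a full-rank subgroup, (ii) the passage finite index $\Leftrightarrow$ finite quotient of matching order, which ensures that $\ker\phi$ is a genuine lattice of rank $n$ rather than merely a subgroup, and (iii) the cardinality step that upgrades injectivity of $\phi|_V$ to surjectivity onto $G$ in the converse. I expect the mild bookkeeping around (ii)—verifying that finiteness of $V$ is what pins the index of $\Lambda$ to $|V|$ and forbids an infinite-index $\Lambda$—to be the most error-prone part, and would state it explicitly rather than leave it implicit.
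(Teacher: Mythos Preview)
Your argument is correct and is exactly the standard coset-transversal proof of this folklore equivalence. Note, however, that the paper does not actually supply its own proof of this theorem: it is quoted as a known result from \cite{HA12} and stated without proof, so there is nothing in the paper to compare your approach against beyond confirming that your reasoning matches the intended meaning of ``lattice tiling'' used there.
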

Now we translate the existence of lattice tilings of $\mathbb{Z}^n$ by $\mathcal{B}(n,2,1,1)$ into group ring equations.
\begin{theorem}\label{groupring}
Let $n\ge3$, then there exists a lattice tiling of $\mathbb{Z}^n$ by $\mathcal{B}(n,2,1,1)$ if and only if there exists a finite abelian group $G$ of order $2n^2+1$ and a subset $T\subseteq G$ satisfying
\begin{enumerate}
  \item [(1)] $|T|=2n+1$,
  \item [(2)] $e\in T$,
  \item [(3)] $T=T^{(-1)}$,
  \item [(4)] $T^2=2G+T^{(2)}+(2n-2)e$.
\end{enumerate}
\end{theorem}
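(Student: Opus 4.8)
The plan is to apply Theorem~\ref{tilinggroup} to $V:=\mathcal{B}(n,2,1,1)$ and then convert the bijectivity requirement into the four stated identities by a direct expansion in $\mathbb{Z}[G]$. First I would record the elementary count $|V|=1+2n+4\binom{n}{2}=2n^2+1$, so by Theorem~\ref{tilinggroup} a lattice tiling by $V$ exists if and only if there is an abelian group $G$ with $|G|=2n^2+1$ and a homomorphism $\phi\colon\mathbb{Z}^n\to G$ whose restriction to $V$ is a bijection. For such a $\phi$ I set $t_i:=\phi(e_i)$ (the $e_i$ being the standard basis vectors), write $U:=\sum_{i=1}^n(t_i+t_i^{-1})\in\mathbb{Z}[G]$ and $C:=\sum_{1\le i<j\le n}(t_it_j+t_it_j^{-1}+t_i^{-1}t_j+t_i^{-1}t_j^{-1})\in\mathbb{Z}[G]$, and put $T:=\{e\}\cup\{t_i^{\pm1}:1\le i\le n\}$. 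Splitting $V$ by Hamming weight, the multiset of $\phi$-images of $V$ is exactly $e+U+C$ (weight $0$, weight $1$, weight $2$, respectively), and $U$, $C$, $T^{(2)}$ depend only on the set $T$, not on the choice of $t_i$ within an inverse-pair.

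For the forward direction, assume $\phi|_V$ is a bijection. Since $\mathbf{0},\pm e_1,\dots,\pm e_n$ are $2n+1$ distinct vectors of $V$, their images $e,t_1^{\pm1},\dots,t_n^{\pm1}$ are pairwise distinct, which gives $|T|=2n+1$, $e\in T$, and $T=T^{(-1)}$ (inversion fixes $e$ and permutes $\{t_i,t_i^{-1}\}$); moreover $T$ then equals the group ring element $e+U$. For~(4) I would expand $T^2=e+2U+U^2$ and split $U^2$ into a diagonal part $\sum_i(t_i+t_i^{-1})^2$ and an off-diagonal part $\sum_{i\ne j}(t_i+t_i^{-1})(t_j+t_j^{-1})$. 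Using $(t_i+t_i^{-1})^2=t_i^2+t_i^{-2}+2e$ and $T^{(2)}=e+\sum_i(t_i^2+t_i^{-2})$, the diagonal part equals $T^{(2)}-e+2ne$, while the off-diagonal part equals $2C$. Finally, bijectivity of $\phi|_V$ gives $e+U+C=\sum_{g\in G}g=G$, hence $C=G-e-U$; substituting all of this into $T^2=e+2U+U^2$ yields $T^2=2G+T^{(2)}+(2n-2)e$, which is~(4).

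For the converse, suppose $G$ and $T$ satisfy (1)--(4). The one point needing genuine care is recovering the generators: \emph{a priori} $T$ is only a symmetric subset containing $e$, but $|G|=2n^2+1$ is odd, so $G$ has no involution, and therefore $T\setminus\{e\}$ — which has $2n$ elements and is inversion-closed — splits into $n$ inverse-pairs $\{t_i,t_i^{-1}\}$ with $t_i\ne t_i^{-1}$. Choosing one $t_i$ per pair, define $\phi\colon\mathbb{Z}^n\to G$ by $\phi(e_i)=t_i$ and extend homomorphically. Running the computation backwards, (4) together with $T^2=e+2U+U^2$ and the diagonal identity $\sum_i(t_i+t_i^{-1})^2=T^{(2)}-e+2ne$ forces the off-diagonal part $2C$ to equal $2(G-e-U)$, hence $C=G-e-U$ in the torsion-free abelian group $\mathbb{Z}[G]$; consequently the multiset of $\phi$-images of $V$ equals $e+U+C=G$, so $\phi|_V\colon V\to G$ attains each value exactly once, and since $|V|=|G|$ it is a bijection. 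Theorem~\ref{tilinggroup} then produces the desired lattice tiling. I expect the main (though modest) obstacle to be precisely this bookkeeping, together with the observation that the odd order of $G$ is exactly what guarantees the pairing $T\setminus\{e\}=\bigcup_i\{t_i,t_i^{-1}\}$ and hence a well-defined set of generators for $\phi$.
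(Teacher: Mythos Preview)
Your proof is correct and follows essentially the same approach as the paper: apply Theorem~\ref{tilinggroup}, set $T=e+\sum_i(t_i+t_i^{-1})$, and expand $T^2$ directly to obtain~(4). In fact you go further than the paper, which only writes out the forward implication; your treatment of the converse---in particular the observation that $|G|=2n^2+1$ is odd, so $G$ has no involutions and $T\setminus\{e\}$ decomposes into $n$ genuine inverse-pairs from which the generators $t_i$ and hence $\phi$ can be recovered---fills in a direction the paper leaves implicit.
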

\begin{proof}
Suppose that there exists a lattice tiling of $\mathbb{Z}^n$ by $\mathcal{B}(n,2,1,1)$.
Let $e_i$, $i=1,2,\dots,n$, be a fixed orthonormal basis of $\mathbb{Z}^{n}$.
  By Theorem~\ref{tilinggroup}, there are both an abelian group $G$ of order $2n^2+1$ and a homomorphism $\phi:\mathbb{Z}^n\rightarrow G$ such that the restriction of $\phi$ to $\mathcal{B}(n,2,1,1)$ is a bijection. Since the homomorphism $\phi$ is determined by the values $\phi(e_i)$, $i=1,\dots,n$, then there exists an $n$-subset $\{a_1,a_2,\dots,a_n\}\subset G$ (let $\phi(e_i)=a_i$) such that
  \begin{align*}
  G=\{e\}\cup\{a_i,a_{i}^{-1}:\ 1\le i\le n\}\cup\{a_ia_j,a_ia_{j}^{-1},a_{i}^{-1}a_j,a_{i}^{-1}a_{j}^{-1}:\ 1\le i<j\le n\}.
  \end{align*}
  In the language of group ring, the above equation can be written as
  \begin{align*}
  G=e+\sum_{i=1}^{n}(a_i+a_{i}^{-1})+\sum_{1\le i<j\le n}(a_i+a_{i}^{-1})(a_j+a_{j}^{-1}).
  \end{align*}
  Let $T=e+\sum_{i=1}^{n}(a_i+a_{i}^{-1})$, then $|T|=2n+1$, $e\in T$ and $T^{(-1)}=T$. Moreover,
  \begin{align*}
  T^2=&(e+\sum_{i=1}^{n}(a_i+a_{i}^{-1}))^2\\
  =&e+2\sum_{i=1}^{n}(a_i+a_{i}^{-1})+\sum_{i=1}^{n}(a_{i}^{2}+a_{i}^{-2})+2\sum_{1\le i<j\le n}(a_i+a_{i}^{-1})(a_j+a_{j}^{-1})+2ne\\
  =&2G+T^{(2)}+(2n-2)e.\qedhere
  \end{align*}
\end{proof}
\section{Proof of the main theorem}\label{mainsec}
In this section, we will prove our main result. The main method was developed by Leung and Zhou \cite{LZ20}, which completely solved the existence of linear perfect Lee codes with radius 2. This method was also used to determine the existence of almost perfect linear Lee codes of radius 2 \cite{XZ2022,ZZ2022} and linear diameter perfect Lee codes with diameter 6 \cite{ZG2022}.

The sketch of our proof is: we will first investigate $T^{(2)}T\pmod{3}$, we can get some restrictions on the coefficients of elements of $T^{(2)}T$. This will solve the cases $n\equiv2\pmod{3}$ except $n=5,11$. By studying $TT^{(3)}\pmod{3}$, we solve the case $n\equiv1\pmod{3}$ except $n=4$.
For the case $n\equiv0\pmod{3}$, we will also study $TT^{(4)}\pmod{5}$ and $TT^{(5)}\pmod{5}$. Small dimensions will be solved by symmetric polynomial method, which was developed by Kim \cite{K17}.

Let $n\ge3$ be an integer. Suppose that $G$ is a finite abelian group with order $2n^2+1$ and $T\subset G$ satisfying conditions in Theorem~\ref{groupring}. Recall that $T=e+\sum_{i=1}^{n}(a_i+a_{i}^{-1})$ and
\begin{align}\label{eq1}
 G=e+\sum_{i=1}^{n}(a_i+a_{i}^{-1})+\sum_{1\le i<j\le n}(a_i+a_{i}^{-1})(a_j+a_{j}^{-1}).
\end{align}
The following lemma is directly from Condition (4) of Theorem~\ref{groupring}.
  \begin{lemma}\label{lemma1}
  For any $t\ne e$, we have
 \[|\{(t_1,t_2)\in T\times T: t_1t_2=t\}|= \begin{cases}
    3, & \mbox{if } t\in T^{(2)}; \\
    2, & \mbox{if }t\notin T^{(2)}.
  \end{cases}\]
\end{lemma}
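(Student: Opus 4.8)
The plan is to read off the identity directly from Condition (4) of Theorem~\ref{groupring}, namely $T^2 = 2G + T^{(2)} + (2n-2)e$, by comparing coefficients of a fixed element $t \in G$ with $t \ne e$. For such an element $t$, the coefficient of $t$ in $T^2$ is precisely the number of ordered pairs $(t_1,t_2) \in T \times T$ with $t_1 t_2 = t$, which is the quantity we wish to count. On the right-hand side, the coefficient of $t$ in $2G$ is $2$ (since $t \in G$), the coefficient of $t$ in $(2n-2)e$ is $0$ (since $t \ne e$), and the coefficient of $t$ in $T^{(2)}$ is $1$ if $t \in T^{(2)}$ and $0$ otherwise (here we use that $T$, and hence $T^{(2)}$, is a set, so all coefficients are $0$ or $1$; this follows from $|T| = 2n+1$ together with $T$ being a subset of $G$). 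Adding these contributions gives coefficient $3$ when $t \in T^{(2)}$ and coefficient $2$ when $t \notin T^{(2)}$, which is exactly the claimed case distinction.

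The only point that requires a word of care is the claim that $T^{(2)}$ has all coefficients in $\{0,1\}$, i.e. that the map $g \mapsto g^2$ is injective on $T$. This need not hold for an arbitrary set in an arbitrary abelian group, so strictly speaking one should either assume it is part of the standing hypotheses or argue it. In fact it follows from the setup: by~\eqref{eq1}, comparing coefficients of $e$ on both sides forces the $2n$ nontrivial elements $a_i, a_i^{-1}$ to be pairwise distinct and distinct from $e$, and the cross terms to contribute $0$ to the coefficient of $e$; a short check of the coefficient of $e$ in $T^2 = 2G + T^{(2)} + (2n-2)e$ then pins down that $T^{(2)} = e + \sum_{i=1}^n (a_i^2 + a_i^{-2})$ has coefficient of $e$ equal to $1$, and the remaining coefficients are at most those of $\tfrac12(T^2 - 2G - (2n-2)e)$, forcing them into $\{0,1\}$. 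Alternatively one simply notes that $T^{(2)}$ appears with a coefficient-$1$ weighting in the integer group-ring identity $T^2 - 2G - (2n-2)e = T^{(2)}$, and since the left side has integer coefficients and $T^2$, $G$ have nonnegative coefficients bounded appropriately, $T^{(2)}$ is a genuine (multiplicity-free) subset.

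I do not expect any real obstacle here: this lemma is a pure bookkeeping consequence of Theorem~\ref{groupring}(4), and the proof is two or three lines of coefficient comparison. The one subtlety worth flagging in the writeup — and the only place a careless argument could go wrong — is ensuring $T^{(2)}$ is treated as a set rather than a multiset, so that its coefficients are genuinely $0$ or $1$; once that is acknowledged, the case split $3$ versus $2$ is immediate from $2 + 1$ versus $2 + 0$.
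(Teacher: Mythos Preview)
Your approach is correct and is exactly what the paper intends: the lemma is stated there as following directly from Condition~(4) of Theorem~\ref{groupring}, with no further proof given, and your coefficient comparison is the natural unpacking of that remark. The one subtlety you flag---that $T^{(2)}$ must be a genuine set rather than a multiset---has a much cleaner resolution than the arguments you offer: since $|G| = 2n^2+1$ is odd, the squaring map $g \mapsto g^2$ is a bijection on $G$, so $T^{(2)}$ is automatically a set of size $2n+1$.
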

From Condition (4) in Theorem~\ref{groupring}, we have
\begin{align}\label{eq4}
TT^{(2)}=T^{3}-(4n+2)G-(2n-2)T.
\end{align}
We write
\begin{align}\label{eq2}
TT^{(2)}=\sum_{i=0}^{M}iX_i,
\end{align}
where $X_i$ $(i=0,1,\dots,M)$ form a partition of group $G$, i.e.,
\begin{align}\label{eq3}
G=\sum_{i=0}^{M}X_i.
\end{align}
Then we have the following lemma.
\begin{lemma}\label{lemma2}
  \begin{enumerate}
    \item [(1)] $\sum_{i=1}^{M}i|X_i|=4n^2+4n+1$,
    \item [(2)] $\sum_{i=0}^{M}|X_i|=2n^2+1$,
    \item [(3)] $\sum_{i=1}^{M}|X_i|=1-\beta+\sum_{i\ge3}\frac{(i-1)(i-2)}{2}|X_i|$, where $2\beta=|T^{*}\cap T^{(2)*}|$.
  \end{enumerate}
\end{lemma}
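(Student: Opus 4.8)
Parts (1) and (2) are formal and I would dispatch them first. For part (2), the sets $X_i$ partition $G$ by \eqref{eq3}, so $\sum_{i=0}^{M}|X_i|=|G|=2n^2+1$. For part (1), apply the augmentation homomorphism $\varepsilon\colon\mathbb{Z}[G]\to\mathbb{Z}$, $g\mapsto 1$, to \eqref{eq2}: since $\varepsilon(T)=\varepsilon(T^{(2)})=2n+1$, we get $\sum_{i=1}^{M}i|X_i|=\varepsilon(TT^{(2)})=(2n+1)^2=4n^2+4n+1$ (this can also be read off from \eqref{eq4}). Throughout the rest I would use that $|G|=2n^2+1$ is odd, so the squaring map $\sigma\colon g\mapsto g^2$ is an automorphism of $G$; consequently $T^{(2)}=\sigma(T)$ and $T^{(4)}=\sigma^{2}(T)$ are again symmetric $(2n+1)$-subsets of $G$ containing $e$, and $G^{(2)}=G$.

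The substance is part (3). Write $c_g$ for the coefficient of $g$ in $TT^{(2)}$, so $X_i=\{g\in G:c_g=i\}$ and $\sum_{g\in G}c_g^{2}=\sum_{i\ge0}i^{2}|X_i|$. The plan is to evaluate the second moment $\sum_g c_g^{2}$ in closed form; part (3) then follows from part (1) by elementary bookkeeping, using only $\binom{i}{2}=\binom{i-1}{2}+(i-1)$ and $\binom{1}{2}=0$. Since $c_g=\bigl|\{(a,b)\in T\times T^{(2)}:ab=g\}\bigr|$, rewriting $ab=a'b'$ as $a(a')^{-1}=b'b^{-1}$ and summing over the common value $x\in G$ gives
\[\sum_{g\in G}c_g^{2}=\bigl|\{(a,b,a',b')\in T\times T^{(2)}\times T\times T^{(2)}:ab=a'b'\}\bigr|=\sum_{x\in G}r(x)\,s(x),\]
where $r(x)$ is the coefficient of $x$ in $T\,T^{(-1)}=T^{2}$ (here $T^{(-1)}=T$) and $s(x)$ is the coefficient of $x$ in $T^{(2)}\bigl(T^{(2)}\bigr)^{(-1)}=\bigl(T^{(2)}\bigr)^{2}=\bigl(T^{2}\bigr)^{(2)}$.

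The next step is to substitute the identity $T^{2}=2G+T^{(2)}+(2n-2)e$ of Theorem~\ref{groupring} and its image $\bigl(T^{2}\bigr)^{(2)}=2G+T^{(4)}+(2n-2)e$ under $\sigma$: thus $r$ equals $2n+1$ at $e$, equals $3$ on $T^{(2)*}$, and equals $2$ elsewhere, while $s$ equals $2n+1$ at $e$, equals $3$ on $T^{(4)*}$, and equals $2$ elsewhere. Partitioning $G\setminus\{e\}$ according to membership in $T^{(2)*}$ and $T^{(4)*}$ — with block sizes $\gamma$, $2n-\gamma$, $2n-\gamma$, $2n^{2}-4n+\gamma$, where $\gamma:=|T^{(2)*}\cap T^{(4)*}|$ — and summing $r(x)s(x)$ over these blocks and over $\{e\}$, every contribution collapses to a polynomial in $n$ except the term coming from $T^{(2)*}\cap T^{(4)*}$, and I expect $\sum_{g\in G}c_g^{2}=12n^{2}+12n+1+\gamma$. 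The last ingredient is to identify $\gamma$: since $\sigma$ is an automorphism, $\sigma(T^{*}\cap T^{(2)*})=\sigma(T^{*})\cap\sigma(T^{(2)*})=T^{(2)*}\cap T^{(4)*}$, so $\gamma=|T^{*}\cap T^{(2)*}|=2\beta$. Hence $\sum_{i\ge2}\binom{i}{2}|X_i|=\sum_{g\in G}\binom{c_g}{2}=\tfrac12\bigl(\sum_{g}c_g^{2}-\sum_{g}c_g\bigr)=4n^{2}+4n+\beta$, and combining this with part (1) through $\sum_{i\ge2}(i-1)|X_i|=\sum_{i\ge2}\binom{i}{2}|X_i|-\sum_{i\ge3}\binom{i-1}{2}|X_i|$ gives
\[\sum_{i=1}^{M}|X_i|=\sum_{i\ge1}i|X_i|-\sum_{i\ge2}(i-1)|X_i|=1-\beta+\sum_{i\ge3}\frac{(i-1)(i-2)}{2}|X_i|,\]
which is part (3).

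None of these steps is a serious obstacle; the only point requiring any thought is the identification $\gamma=|T^{(2)*}\cap T^{(4)*}|=2\beta$, which is exactly where the oddness of $|G|$ (hence the automorphism $\sigma$) and the symmetry $T=T^{(-1)}$ enter. Everything else is routine group-ring bookkeeping plus the elementary identities relating $\sum i|X_i|$, $\sum i^{2}|X_i|$ and $\sum|X_i|$.
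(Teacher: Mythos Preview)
Your proof is correct, and for part~(3) it follows a genuinely different route from the paper. The paper computes $\sum_{i\ge1}|X_i|=|\mathrm{supp}(TT^{(2)})|$ by inclusion--exclusion on the union $\bigcup_{b\in T^{(2)}} bT$: it writes $T^{(2)}=\{b_0,\dots,b_{2n}\}$, uses Lemma~\ref{lemma1} to evaluate each pairwise intersection $|b_iT\cap b_jT|$ as $2$ or $3$ according to whether $b_ib_j^{-1}\in T^{(2)}$, shows that the number of pairs with value~$3$ is $\alpha=2n+\beta$, and then observes that for $g\in X_i$ the alternating sum of higher-order terms contributes exactly $\binom{i-1}{2}$. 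Your approach instead computes the second moment $\sum_g c_g^2=\sum_{x\in G}r(x)s(x)$ directly from the group-ring identity $T^2=2G+T^{(2)}+(2n-2)e$ and its image under $\sigma$, then recovers $\sum_{i\ge1}|X_i|$ from the first two moments via the binomial identity $\binom{i}{2}-\binom{i-1}{2}=i-1$.

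Both arguments hinge on the same non-trivial point, namely $|T^{(2)*}\cap T^{(4)*}|=|T^{*}\cap T^{(2)*}|=2\beta$, which you dispatch cleanly via the automorphism $\sigma$; the paper reaches the equivalent statement through its computation of $\alpha$. Your treatment is somewhat more algebraic and self-contained (it does not invoke Lemma~\ref{lemma1} at all and avoids the bookkeeping of inclusion--exclusion over $r\ge3$), while the paper's version makes the combinatorial structure of the translates $bT$ more visible. Either is perfectly adequate here.
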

\begin{proof}
  The first two equations are directly from Equations~(\ref{eq2}) and (\ref{eq3}). Now we count the size of $\text{supp}(T^{(2)}T)$.
We write $T^{(2)}=\sum_{i=0}^{2n}b_i$, where $b_0=e$ and $b_{i}^{-1}=b_{n+i}$ for $i=1,2,\dots,n$.
  By Lemma~\ref{lemma1}, we have
   \[|b_iT\cap b_jT|= \begin{cases}
    3, & \mbox{if } b_ib_{j}^{-1}\in T^{(2)}; \\
    2, & \mbox{if }b_ib_{j}^{-1}\notin T^{(2)}.
  \end{cases}\]
  Let $\alpha=|\{(i,j): 0\le i<j\le 2n, b_ib_{j}^{-1}\in T^{(2)}\}|$ and $N(t)=|\{(i,j): 0\le i<j\le 2n, b_ib_{j}^{-1}=t\}|$, then $\alpha=\sum_{t\in T^{(2)}}N(t)$. Since $\gcd(2n^2+1,2)=1$, then $T^{(2)}$ also satisfies conditions in Theorem~\ref{groupring}. By the definition of $b_i$ and Lemma~\ref{lemma1}, we have
     \[N(t)+N(t^{-1})= \begin{cases}
    3, & \mbox{if } t\in T^{(2)*}\cap T^{(4)*}; \\
    2, & \mbox{if } t\in T^{(2)}\backslash T^{(4)}.
  \end{cases}\]
  Hence, $\alpha=|T^{(2)}\backslash T^{(4)}|+\frac{3|T^{(2)*}\cap T^{(4)*}|}{2}=|T^{*}\backslash T^{(2)*}|+\frac{3|T^{*}\cap T^{(2)*}|}{2}=2n+\beta$.

By the inclusion-exclusion principle, we have
  \begin{align*}
  \sum_{i=1}^{M}|X_i|=&|\text{supp}(T^{(2)}T)|\\
  =&\sum_{i=0}^{2n}|b_iT|-\sum_{0\le i<j\le 2n}|b_iT\cap b_jT|+\sum_{r\ge3}\sum_{0\le i_1<\dots<i_r\le 2n}(-1)^{r-1}|b_{i_1}T\cap\dots\cap b_{i_r}T|\\
  =&4n^2+4n+1-3\alpha-2(\binom{2n+1}{2}-\alpha)+\sum_{r\ge3}\sum_{0\le i_1<\dots<i_r\le 2n}(-1)^{r-1}|b_{i_1}T\cap\dots\cap b_{i_r}T|.
  \end{align*}
  Suppose that $g\in b_{i_1}T\cap\dots\cap b_{i_r}T$ for some $r\ge 3$, then $g\in X_i$ for some $i\ge3$. The contribution for $g$ in the sum $\sum_{r\ge3}\sum_{1\le i_1<\dots<i_r\le 2n}(-1)^{r-1}|b_{i_1}T\cap\dots\cap b_{i_r}T|$ is
  \[(-1)^{3-1}\binom{i}{3}+(-1)^{4-1}\binom{i}{4}+\cdots=\binom{i}{2}-\binom{i}{1}+\binom{i}{0}=\frac{(i-1)(i-2)}{2}.\]
  Hence, we have
  \begin{align*}
  \sum_{i=1}^{M}|X_i|=&4n^2+4n+1-3\alpha-2(\binom{2n+1}{2}-\alpha)+\sum_{r\ge3}\sum_{1\le i_1<\dots<i_r\le 2n}(-1)^{r-1}|b_{i_1}T\cap\dots\cap b_{i_r}T|\\
  =&4n^2+4n+1-3\alpha-2(\binom{2n+1}{2}-\alpha)+\sum_{i\ge3}|X_i|\frac{(i-1)(i-2)}{2}\\
  =&1-\beta+\sum_{i\ge3}\frac{(i-1)(i-2)}{2}|X_i|.\qedhere
  \end{align*}
\end{proof}

\begin{lemma}\label{lemma6}
$e$ appears $2\beta+1$ times in $T^{(3)}$, where $2\beta=|T^{*}\cap T^{(2)*}|$.
\end{lemma}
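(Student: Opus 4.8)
The plan is to compute the coefficient of $e$ in $T^{(3)}$ directly from the combinatorial data already extracted. Recall $T = e + \sum_{i=1}^n (a_i + a_i^{-1})$, so $T^{(3)} = e + \sum_{i=1}^n (a_i^3 + a_i^{-3})$, and the coefficient of $e$ in $T^{(3)}$ equals $1$ plus the number of indices $i$ with $a_i^3 = e$ (each such $i$ contributing $2$, since $a_i^3 = e$ iff $a_i^{-3} = e$, and $a_i \ne a_i^{-1}$ would need separate care — but note $a_i$ and $a_i^{-1}$ are distinct elements of $T\setminus\{e\}$ since $|T| = 2n+1$). So I must show that the number of $i$ with $a_i^3 = e$ is exactly $\beta$, i.e., $2\beta = |\{i : a_i^3 = e\}| + |\{i : a_i^{-3} = e\}| = |T^* \cap T^{(2)*}|$.

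First I would relate the condition $a_i^3 = e$ to membership in $T^* \cap T^{(2)*}$. Observe that $T^{(2)} = e + \sum_{i=1}^n(a_i^2 + a_i^{-2})$, so $T^{(2)*} = \{a_i^{\pm 2} : 1 \le i \le n\}$ as a multiset, and similarly $T^* = \{a_i^{\pm 1} : 1 \le i \le n\}$. The key observation is that $a_i^{\pm 1} = a_j^{\pm 2}$ for some $i, j$ forces a short relation among the $a$'s; since $G$ has odd order $2n^2+1$, squaring is a bijection, and one should check using the tiling structure in Equation~(\ref{eq1}) that the only way an element $a_i$ (or $a_i^{-1}$) can coincide with some $a_j^{\pm 2}$ is when $j = i$, giving $a_i = a_i^{\pm 2}$, hence $a_i^{\pm 1} = e$ (impossible) or $a_i^{3} = e$ (when $a_i = a_i^{-2}$) or $a_i = a_i^2$ (impossible). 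Thus $a_i \in T^{(2)*}$ iff $a_i = a_i^{-2}$ iff $a_i^3 = e$, and symmetrically for $a_i^{-1}$. This would give $T^* \cap T^{(2)*}$ (counted with multiplicity, or as the multiset intersection) to be exactly $\{a_i^{\pm 1} : a_i^3 = e\}$, whose size is $2\cdot|\{i : a_i^3 = e\}|$, so $\beta = |\{i : a_i^3 = e\}|$ and the coefficient of $e$ in $T^{(3)}$ is $1 + 2\beta$.

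The main obstacle is justifying that distinct elements of $T$ cannot collide under the squaring-versus-identity comparison except in the forced way above — that is, ruling out $a_i = a_j^{\pm 2}$ with $i \ne j$, and also $a_i^{-1} = a_j^{\pm 2}$, $a_i = a_j^{-2}$, etc., for $i \ne j$. The natural tool is that in the tiling~(\ref{eq1}) every non-identity element of $G$ is hit by a controlled number of monomials; an identity like $a_i = a_j^2 = a_j \cdot a_j$ would say $a_i$ is expressible both as a "weight-one" term $a_i$ and (after rearranging) related to a "weight-two" term, contradicting the bijectivity of $\phi$ restricted to $\mathcal{B}(n,2,1,1)$ unless $i = j$. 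I would phrase this carefully: from $a_i = a_j^2$ we get $a_i a_j^{-1} = a_j$, and if $i \ne j$ the left side is a weight-two ball element while the right side is weight-one, forcing them equal as group elements but distinct as ball points — contradiction with Theorem~\ref{tilinggroup}. Handling all the sign cases uniformly (using $T = T^{(-1)}$) is the bookkeeping that needs care, but each case reduces to the same contradiction.
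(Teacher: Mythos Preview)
Your proposal is correct and follows essentially the same route as the paper. The paper's proof also reduces $T^{*}\cap T^{(2)*}$ to the set $\{a_i^{\pm1}:a_i^{3}=e\}$ by ruling out cross-relations $a_i=a_j^{\pm2}$ with $i\ne j$ via the observation that such a relation would make $a_j=a_ia_j^{-1}$ appear both as a weight-one and a weight-two term in the partition of Equation~(\ref{eq1}); the remaining diagonal cases $a_i=a_i^{2}$ and $a_i=a_i^{-2}$ are dispatched exactly as you indicate.
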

\begin{proof}
If there exist $a_i,a_j\in T^{*}$ with $j\ne i$ such that $a_i=a_{j}^{2}\in T^{*}\cap T^{(2)*}$, then $a_{j}=a_{i}a_{j}^{-1}$. By Equation~(\ref{eq1}), it can be seen that $a_{j}$ appears twice in $G$, which is a contradiction.

 If there exists $a_{i}\in T^{*}$ such that $a_i=a_{i}^{2}\in T^{*}\cap T^{(2)*}$, then $a_{i}=e$, which is a contradiction again.

 If there exists $a_{i}\in T^{*}$ such that $a_i=a_{i}^{-2}\in T^{*}\cap T^{(2)*}$, then $a_{i}^{3}=e$.

 On the other hand, if there exists $a_{i}\in T^{*}$ such that $a_{i}^3=e$, then $a_i=a_{i}^{-2}\in T^{*}\cap T^{(2)*}$. Hence $e$ appears $2\beta+1$ times in $T^{(3)}$.
\end{proof}

\subsection{$n\equiv2\pmod{3}$}
\begin{proposition}\label{prop1}
  Theorem~\ref{mainthm} holds for $n\equiv2\pmod{3}$ except for $n=5,11$.
\end{proposition}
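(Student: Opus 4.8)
The plan is to reduce the relation $TT^{(2)}=T^{3}-(4n+2)G-(2n-2)T$ of Equation~(\ref{eq4}) modulo $3$. For every $A\in\mathbb{Z}[G]$ one has the Frobenius identity $A^{3}\equiv A^{(3)}\pmod 3$, so, using $n\equiv 2\pmod 3$ (hence $4n+2\equiv 1$ and $2n-2\equiv 2\pmod 3$), I would first obtain
\[TT^{(2)}\equiv T^{(3)}+T-G\pmod 3 .\]
Writing $TT^{(2)}=\sum_{i}iX_{i}$ as in~(\ref{eq2}), setting $c_{g}:=(TT^{(2)})_{g}$ and $m_{g}:=(T^{(3)})_{g}$, and recalling $m_{e}=2\beta+1$ from Lemma~\ref{lemma6}, the congruence reads $c_{g}\equiv m_{g}-\delta_{g}\pmod 3$, where $\delta_{g}=1$ if $g\notin T$ and $\delta_{g}=0$ otherwise. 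From this I would extract two facts: (a) every $g\notin T\cup\operatorname{supp}(T^{(3)})$ has $m_{g}=0$, so $c_{g}\equiv 2\pmod 3$; (b) every $g\ne e$ with $c_{g}\not\equiv 2\pmod 3$ satisfies $g\in T^{*}$ or $m_{g}\ge 1$, i.e.\ $g\in T^{*}\cup\operatorname{supp}(T^{(3)})$. Since Lemma~\ref{lemma6} gives $|\operatorname{supp}(T^{(3)})|\le 1+2(n-\beta)$, this exceptional set has at most $4n-2\beta+1$ elements; in particular $|X_{0}|\le 4n-2\beta$ and $|X_{3}|\le 4n-2\beta+1$.

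Next I would use the mod-$3$ congruence together with the tiling equation~(\ref{eq1}) for $T$ to analyze the multiset $gT^{(2)}\cap T$ (whose size is $c_{g}$) and thereby bound the coefficients, the goal being $c_{g}\le 3$ for every $g\ne e$. This is the quantitative heart of the argument, and it is exactly the place where $n=5$ must be set aside (for that dimension such a uniform bound need not hold, and it is treated separately by the symmetric-polynomial method). Granting the bound, $|X_{i}\setminus\{e\}|=0$ for all $i\ge 4$, while $e$ itself lies in $X_{2\beta+1}$.

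I would then bring in the counting identities. Lemma~\ref{lemma2}(1)--(3) give $\sum_{i}|X_{i}|=2n^{2}+1$, $\sum_{i}i|X_{i}|=(2n+1)^{2}$, and, equivalently, $\sum_{i}i^{2}|X_{i}|=12n^{2}+12n+1+2\beta$. Removing the contribution of $e$ and writing $x_{i}=|X_{i}\setminus\{e\}|$, these become
\begin{align*}
x_{0}+x_{1}+x_{2}+x_{3}&=2n^{2},\\
x_{1}+2x_{2}+3x_{3}&=4n^{2}+4n-2\beta,\\
x_{1}+4x_{2}+9x_{3}&=12n^{2}+12n-2\beta-4\beta^{2},
\end{align*}
which solve to $x_{3}=2n^{2}+2\beta-2\beta^{2}-x_{0}$. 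Since $x_{0}=|X_{0}|\le 4n-2\beta$, this yields $|X_{3}|\ge x_{3}\ge 2n^{2}-4n+4\beta-2\beta^{2}$, whereas the first step gave $|X_{3}|\le 4n-2\beta+1$. These bounds are compatible only if $n^{2}-4n\le\beta(\beta-3)$. As $\beta\le n$ (and $\beta=n$ would force $\langle a_{i}\rangle=G$ to be elementary abelian of order $2n^{2}+1$, i.e.\ $2n^{2}+1$ a power of $3$, which for $n\ge 3$ happens only at $n=11$, where $2\cdot 11^{2}+1=243=3^{5}$), this inequality fails for every $n\equiv 2\pmod 3$ with $n\ge 3$ other than $n=11$. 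This gives the desired non-existence for all $n\equiv 2\pmod 3$, $n\ge 3$, except $n=5,11$ — with a handful of small dimensions verified directly should the coefficient-bound argument require $n$ above a fixed threshold.

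The step I expect to be the main obstacle is obtaining the bound $c_{g}\le 3$ for $g\ne e$: this requires a careful combinatorial analysis of $gT^{(2)}\cap T$ exploiting the precise structure~(\ref{eq1}) of $T$, and it is simultaneously the step whose failure singles out $n=5$ and whose interaction with the order-$3$ elements of $G$ (which can be numerous when $n=11$, pushing $\beta$ up to $n$) explains why $n=11$ escapes the final inequality.
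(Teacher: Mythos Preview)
Your mod-$3$ reduction, the identification of $e\in X_{2\beta+1}$, and the counting system for $x_0,\dots,x_3$ are all correct, and your endpoint ($\beta=n$, i.e.\ $T=T^{(2)}$) is exactly what the paper reaches in its Case~2. But two steps are genuine gaps rather than routine details.

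\textbf{The bound $c_g\le 3$ for $g\ne e$.} You propose to obtain this by a ``direct combinatorial analysis of $gT^{(2)}\cap T$'' using equation~(\ref{eq1}). There is no evident local argument of that kind: the tiling relation only controls pairwise intersections $|tT\cap t'T|\in\{2,3\}$ (Lemma~\ref{lemma1}), and a priori nothing prevents many translates from piling up at a single $g$. The paper does \emph{not} get the bound locally. It first uses $TT^{(2)}\equiv T^{(3)}+2G+T\pmod 3$ to force $|X_0|=0$ and $|X_1|\le 1$, then uses the inclusion--exclusion identity of Lemma~\ref{lemma2}(3) to show $M\ge n+2$, and finally runs a case split on $|X_M|$: if $|X_M|\ge 2$ a counting contradiction follows, while $|X_M|=1$ forces $X_M=\{e\}$ and then (again via Lemma~\ref{lemma2}) $|X_i|=0$ for $4\le i\le M-1$. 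In other words, the conclusion ``$c_g\le 3$ for $g\ne e$'' is a \emph{consequence} of the global counting, not an input to it. Your sketch does not indicate how a local analysis would replace this, and I do not see one; as written, the heart of the proof is missing. (Your remark that $n=5$ is the exceptional case of this step is accurate: in the paper it arises in the Claim $|X_1|=0$, whose failure would allow $M\ge n+1$ with $|X_M|=2$.)

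\textbf{The step from $\beta=n$ to $n=11$.} Your deduction that $\beta=n$ forces $G$ to be an elementary abelian $3$-group, hence $2n^2+1=3^k$, is correct. But the assertion that this Diophantine equation has no solution with $n\ge 3$ other than $n=11$ is itself a nontrivial fact (it is a Ramanujan--Nagell-type equation) and you give no argument for it. The paper avoids this entirely: from $T=T^{(2)}$ it observes that $T^{*}$ is a $(2n^2+1,\,2n,\,1,\,2)$ regular partial difference set, and then invokes Ma's classification \cite{M1994} to conclude $n=11$. That route is cleaner and does not require solving an exponential Diophantine equation.

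In summary: your outline converges to the same target ($T=T^{(2)}$, hence $n=11$), but the decisive coefficient bound requires the global counting of Lemma~\ref{lemma2} and a case analysis on $|X_M|$ rather than a local intersection argument, and the final step is handled in the paper via partial difference sets rather than the unresolved Diophantine claim.
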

\begin{proof}
By Equation~(\ref{eq4}), we obtain
\begin{align}\label{eq23}
T^{(2)}T\equiv T^{(3)}+2G+T\pmod{3}.
\end{align}
This implies that
\begin{align*}
&\sum_{i\ge0}|X_{3i}|=a_1,\\
&\sum_{i\ge0}|X_{3i+1}|=a_2,\\
&\sum_{i\ge0}|X_{3i+2}|=2n^2+1-a_1-a_2,
\end{align*}
where
\begin{align*}
&a_1=|T\backslash T^{(3)}|+b_1+b_2,\\
&a_2=b_3+b_4,\\
&b_1=|\{t\in T\cap T^{(3)}: t\text{ appears }3i\text{ times in }T^{(3)}\text{ for some }i\}|,\\
&b_2=|\{t\in T^{(3)}\backslash T: t\text{ appears }3i+1\text{ times in }T^{(3)}\text{ for some }i\}|,\\
&b_3=|\{t\in T\cap T^{(3)}: t\text{ appears }3i+1\text{ times in }T^{(3)}\text{ for some }i\}|,\\
&b_4=|\{t\in T^{(3)}\backslash T: t\text{ appears }3i+2\text{ times in }T^{(3)}\text{ for some }i\}|.
\end{align*}
Hence $a_1+a_2\le4n+1$. Since for any $t\in T^{*}$, $t=t^{-1}\cdot t^2$, $t^3=t\cdot t^2$, then Equation (\ref{eq23}) implies that for any $e\ne g\in G$, $g$ appears at least twice in $TT^{(3)}$. Hence $|X_0|=0$ and $|X_1|\le1$.

{\bf{Claim: $|X_1|=0$.}}

Assume to the contrary, $|X_1|=1$, then $e\in X_1$ and $\beta=0$. By Lemma~\ref{lemma2}, we have
\begin{equation}\label{eq7}
\begin{split}
\sum_{i=3}^{M}(i-2)|X_i|=&\sum_{i=1}^{M}i|X_i|-2\sum_{i=1}^{M}|X_i|+|X_1|\\
=&4n^2+4n+1-2(2n^2+1)+1\\
=&4n,
\end{split}
\end{equation}
and
\begin{equation}\label{eq8}
\begin{split}
2n^2=&\sum_{i=3}^{M}\frac{(i-1)(i-2)}{2}|X_i|\\
\le&\frac{M-1}{2}\sum_{i=3}^{M}(i-2)|X_i|\\
=&\frac{M-1}{2}4n.
\end{split}
\end{equation}
This leads to $M\ge n+1$. Note that for any $t\in T^{*}$, $t$ appears at least $3$ times in $TT^{(2)}$. Then Equation (\ref{eq7}) implies that
\begin{align*}
4n=&\sum_{i=3}^{M}(i-2)|X_i|\\
\ge&(2n-|X_M|)+(M-2)|X_M|.
\end{align*}
This leads to $(M-3)|X_M|\le2n$. So $|X_M|\le2$. Since $X_{M}=X_{M}^{(-1)}$, then $|X_M|=2$ and $M\le n+3$. Note that for any $t\in T^{*}\cup T^{(3)*}$, if $t$ appears $a$ times in $T^{*}\cup T^{(3)*}$, then $t$ appears $a+2$ times in $TT^{(2)}$. By Equation~(\ref{eq7}) again, we have $4n\ge(3-2)(4n-2(M-2))+(M-2)2$. This implies that $|X_3|=4n-2(M-2)$, $|X_M|=2$ and $|X_i|=0$ for $4\le i\le M-1$. From Equation (\ref{eq8}), we obtain
\begin{align*}
2n^2=&4n-2(M-2)+2\frac{(M-1)(M-2)}{2}\\
=&4n+(M-2)(M-3)\\
\le&4n+n(n+1).
\end{align*}
This is possible only for $n=5$. This finishes the proof of claim.

 By Lemma~\ref{lemma2}, we have
\begin{equation}\label{eq5}
\begin{split}
\sum_{i=3}^{M}(i-2)|X_i|=&\sum_{i=2}^{M}i|X_i|-2\sum_{i=2}^{M}|X_i|\\
=&4n^2+4n+1-2(2n^2+1)\\
=&4n-1,
\end{split}
\end{equation}
and
\begin{equation}\label{eq6}
\begin{split}
2n^2+\beta=&\sum_{i=3}^{M}\frac{(i-1)(i-2)}{2}|X_i|\\
\le&\frac{M-1}{2}\sum_{i=3}^{M}(i-2)|X_i|\\
=&\frac{M-1}{2}(4n-1).
\end{split}
\end{equation}
This leads to $M\ge\frac{4n^2+2\beta}{4n-1}+1$. Since $M\in\mathbb{Z}$, then $M\ge n+2$.

{\bf{Case 1: $|X_{M}|\ge2$.}}

Note that for any $t\in T$, $t$ appears at least $3$ times in $TT^{(2)}$.
From Equation~(\ref{eq5}), we obtain
\begin{align*}
4n-1=&\sum_{i=3}^{M}(i-2)|X_i|\\
\ge&(3-2)(2n+1-|X_M|)+n|X_M|\\
=&2n+1+(n-1)|X_M|.
\end{align*}
This leads to $|X_{n+2}|=2$, $|X_{3}|=2n-1$, $e\in X_3$, $\beta=1$ and $|X_i|=0$ for all $4\le i\le n+1$. By Lemma~\ref{lemma2}, we have
\begin{align*}
2n^2+1=&\sum_{i=3}^{M}\frac{(i-1)(i-2)}{2}|X_i|\\
=&|X_3|+\frac{(n+1)n}{2}|X_{n+2}|\\
=&2n-1+n^2+n\\
=&n^2+3n-1.
\end{align*}
This implies that $n^2-3n+2=0$, which is a contradiction.

{\bf{Case 2: $|X_M|=1$.}}

For this case, we have $X_M=\{e\}$ and $M=2\beta+1$.
Note that for any $t\in T^{*}\cup T^{(3)*}$, if $t$ appears $a$ times in $T^{*}\cup T^{(3)*}$, then $t$ appears $a+2$ times in $TT^{(2)}$. By Equation~(\ref{eq5}), we have
\begin{align*}
4n-1=&\sum_{i=3}^{M}(i-2)|X_i|\\
\ge&(3-2)(4n+1-M)+(M-2)\\
=&4n-1.
\end{align*}
This implies that $|X_3|=4n+1-M$ and $|X_i|=0$ for all $4\le i\le M-1$. By Lemma~\ref{lemma2}, we have
\begin{align*}
2n^2+\frac{M-1}{2}=&\sum_{i=3}^{M}\frac{(i-1)(i-2)}{2}|X_i|\\
=&|X_3|+\frac{(M-1)(M-2)}{2}\\
=&4n+1-M+\frac{(M-1)(M-2)}{2}.
\end{align*}
This leads to $M^2-6M-(4n^2-8n-5)=0$, then $M=2n+1$, and so $T=T^{(2)}$. Hence $(T^{*})^2=2G-T^{*}+(2n-2)e$ and $T^{*}$ is a $(v,k,\lambda,\mu)$ regular partial difference set with $v=2n^2+1$, $k=2n$, $\lambda=1$ and $\mu=2$. By \cite[Theorem 6.1]{M1994}, this is only possible for $n=11$.
\end{proof}
\subsection{$n\equiv1\pmod{3}$}
By Equation (\ref{eq4}), we obtain
\begin{align*}
T^4=&(8n^2+8n+2)G+(2n-2)T^2+T^2T^{(2)}\\
=&(8n^2+8n+2)G+(2n-2)(2G+T^{(2)}+(2n-2)e)+(2G+T^{(2)}+(2n-2)e)T^{(2)}\\
=&(8n^2+16n)G+(4n-4)T^{(2)}+(4n^2-8n+4)e+(T^{(2)})^2\\
=&(8n^2+16n+2)G+(4n-4)T^{(2)}+T^{(4)}+(4n^2-6n+2)e.
\end{align*}
This implies
\begin{align}
TT^{(3)}\equiv2G+T^{(4)}\pmod{3}.\label{eq11}
\end{align}
We write $TT^{(3)}=\sum_{i=0}^{L}iZ_i$, where $Z_i\ (i=0,1,\dots,L)$ form a partition of group $G$, i.e., $G=\sum_{i=0}^{L}Z_i$. Then it is easy to get the following equations.
\begin{align}
  &4n^2+4n+1=\sum_{i=1}^{L}i|Z_i|,\label{eq9}\\
  &2n^2+1=\sum_{i=0}^{L}|Z_i|.\label{eq10}
\end{align}
By Equation~(\ref{eq11}), we see that for any $t^4\in T^{(4)}$, $t^4=t\cdot t^3$, so $t^4$ appears at least 3 times in $TT^{(3)}$, and for any $g\in G\backslash T^{(4)}$, $g$ appears at least 2 times in $TT^{(3)}$. Hence $|Z_0|=|Z_1|=0$. Moreover, we have
\begin{align}
&\sum_{i\ge1}|Z_{3i}|=2n+1,\label{eq12}\\
&\sum_{i\ge0}|Z_{3i+2}|=2n^2-2n.\label{eq13}
\end{align}

\begin{lemma}\label{lemma3}
For any $g\in G$, $g$ appears at most 2 times in $T^{(3)}$.
\end{lemma}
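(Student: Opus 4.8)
The plan is to argue by contradiction. Suppose some $g\in G$ appears at least $3$ times in $T^{(3)}$; since $T^{(3)}=e+\sum_{i=1}^{n}(a_i^{3}+a_i^{-3})$, this means that among the $2n$ elements $a_i^{\pm3}$ (together with the lone $e$) the value $g$ is attained at least three times --- equivalently, there are at least three pairs $(i,\epsilon)$ with $a_i^{\epsilon}=h$ for some fixed $h$ with $h^3=g$, counting the $e$-contribution appropriately. The natural thing to exploit is Equation~(\ref{eq1}): every non-identity element of $G$ appears \emph{exactly once} in the expansion of $G$, so any relation forced among the $a_i^{\pm1}$ by a coincidence among the $a_i^{\pm3}$ must be compatible with that uniqueness.

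First I would set up the case analysis exactly as in the proof of Lemma~\ref{lemma6}: a multiplicity-$\ge3$ occurrence of $g$ in $T^{(3)}$ produces (at least) two ``independent'' equations of the form $x^{3}=y^{3}$ with $x,y\in\{e,a_1^{\pm1},\dots,a_n^{\pm1}\}$ distinct, hence $x=\zeta y$ with $\zeta^{3}=e$, $\zeta\ne e$. So the hypothesis would force the existence of an element of order $3$ in $G$ — but $|G|=2n^{2}+1$, and I would first check whether $3\mid 2n^2+1$ is even possible; for $n\equiv1\pmod 3$ (the subsection we are in) $2n^2+1\equiv0\pmod3$, so order-$3$ elements \emph{can} exist and this cheap route fails. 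The real work is therefore to rule out three simultaneous coincidences among the $a_i^{\pm3}$ using (\ref{eq1}). Concretely, if $a_i^{3}=a_j^{3}=a_k^{3}$ with $i,j,k$ distinct (and similarly for the mixed-sign versions), then the ratios $a_ia_j^{-1}$, $a_ia_k^{-1}$, $a_ja_k^{-1}$ are three distinct nontrivial cube roots of unity, which is impossible since a cyclic group of order $3$ has only two nontrivial elements; the sign-mixed cases ($a_i^{3}=a_j^{-3}$, etc.) are handled by the same pigeonhole once one writes out all the ratios that are forced to be cube roots of unity, checking each time via (\ref{eq1}) that no element is made to appear twice in $G$.

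The main obstacle I anticipate is the bookkeeping in the sign-mixed and $e$-involving subcases: one must enumerate the ways a value can occur three times in $e+\sum(a_i^3+a_i^{-3})$ — e.g. $e$ together with two equal $a_i^{\pm3}$ forcing $a_i^{3}=e$ hence an order-$3$ element which then has to reappear among the $a_\ell^{\pm1}$ and contradicts (\ref{eq1}) as in Lemma~\ref{lemma6}; or three of the $a_i^{\pm3}$ coinciding, forcing three distinct cube roots of unity — and to verify in each branch that the resulting forced equality among lower powers collides with the uniqueness in (\ref{eq1}). None of these steps is deep, but the case list has to be exhaustive. I would organize it by how many of the three coinciding terms are $e$ (zero or one; two or three is immediately absurd) and then by the sign pattern on the remaining $a_i^{\pm3}$, reducing each to either ``$G$ has an element appearing twice'' or ``a cyclic group of order $3$ has $\ge3$ nontrivial elements,'' both contradictions.
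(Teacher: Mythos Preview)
Your plan has a genuine gap, and the paper's proof proceeds by a completely different route.

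The most concrete failure is your handling of the case $g=e$. You write that ``$e$ together with two equal $a_i^{\pm3}$ forcing $a_i^{3}=e$ \ldots contradicts (\ref{eq1}) as in Lemma~\ref{lemma6}.'' But Lemma~\ref{lemma6} does \emph{not} derive a contradiction from $a_i^{3}=e$; its proof explicitly records that $a_i^{3}=e$ is exactly the situation contributing to $\beta$. Nothing in Equation~(\ref{eq1}) forbids a single $a_i$ of order~$3$: one checks that $a_i^{2}=a_i^{-1}$ produces no repeated element in the partition. So your case analysis cannot close this branch. In fact, Lemma~\ref{lemma3} is precisely what the paper uses afterwards to conclude $\beta=0$ in this subsection; you are trying to prove the lemma by assuming a consequence of it.

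There is a second problem with your pigeonhole step. You assert that three distinct ratios $a_i^{\epsilon_1}a_j^{-\epsilon_2}$, $a_i^{\epsilon_1}a_k^{-\epsilon_3}$, $a_j^{\epsilon_2}a_k^{-\epsilon_3}$ being nontrivial cube roots of unity is impossible because ``a cyclic group of order~$3$ has only two nontrivial elements.'' But $G$ is an arbitrary abelian group of order $2n^{2}+1$, and for $n\equiv1\pmod3$ one has $3\mid|G|$; when $9\mid|G|$ (e.g.\ $n=7$, $|G|=99$) the $3$-torsion subgroup may be $\mathbb{Z}_3\times\mathbb{Z}_3$, which has eight nontrivial elements. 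Your three ratios satisfy the single relation that the third is the product of the other two, so they can sit comfortably inside $\mathbb{Z}_3^2$ without any coincidence, and no contradiction follows.

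The paper avoids all of this structural case analysis. It argues by counting: if some $g$ appears at least three times in $T^{(3)}$, then (using Lemma~\ref{lemma1} to bound $|gT\cap g'T|\ge2$ for every other $g'$ in $T^{(3)}$) the elements of $gT$ already carry total weight at least $3(2n+1)+2(2n-2)$ in $TT^{(3)}$, while the remaining $2n^{2}-2n$ elements of $G$ each have coefficient at least~$2$ (from $|Z_0|=|Z_1|=0$). Summing gives $\sum_i i|Z_i|\ge 4n^{2}+6n-1$, contradicting Equation~(\ref{eq9}). This global counting sidesteps the structure of the $3$-torsion entirely.
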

\begin{proof}
 Assume to the contrary, suppose that $g$ appears at least $3$ times in $T^{(3)}$, then for any $t\in T$, $g\cdot t$ appears at least 3 times in $TT^{(3)}$. By Lemma~\ref{lemma1}, for any $g'\in T^{(3)}$, $g'\ne g$, we have $|gT\cap g'T|\ge2$. This implies that the elements in $gT$ appear at least $3(2n+1)+(2n-2)2$ times in $TT^{(3)}$. By Equation~(\ref{eq9}), we obtain
 \begin{align*}
&4n^2+4n+1\\
 =&\sum_{i=1}^{L}i|Z_i|\\
 \ge&2(2n^2-2n)+3(2n+1)+2(2n-2)\\
 =&4n^2+6n-1,
 \end{align*}
 which is a contradiction.
\end{proof}
Since $e$ appears odd times in $T^{(3)}$, then $e$ appears once in $T^{(3)}$.
\begin{lemma}\label{lemma4}
  For any $e\ne g\in G$, $g$ appears at most $n+2$ times in $TT^{(3)}$.
\end{lemma}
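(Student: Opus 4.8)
The plan is to bound the multiplicity of a fixed nonidentity element $g$ in the product $TT^{(3)}$ by a careful inclusion–exclusion on the translates of $T$ by elements of $T^{(3)}$, exactly in the spirit of the proof of Lemma~\ref{lemma3}. Write $T^{(3)} = \sum_{i=0}^{L'} c_i h_i$ where the $h_i$ run over $\operatorname{supp}(T^{(3)})$; by Lemma~\ref{lemma3} each $c_i \in \{1,2\}$, and by the remark just before the statement $e$ occurs exactly once, so $h_0 = e$ has $c_0 = 1$. The coefficient of $g$ in $TT^{(3)}$ equals $\sum_i c_i \cdot [\,g \in h_i T\,] = \sum_{i : g h_i^{-1} \in T} c_i$. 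So I need to bound $\sum_{i} c_i$ over those $i$ with $g h_i^{-1} \in T$, i.e. the number of ways (weighted by $c_i$) to write $g = h \cdot t$ with $h \in \operatorname{supp}(T^{(3)})$ and $t \in T$.

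First I would set $S = \{\,(h,t) : h \in T^{(3)} \text{ as a multiset}, t \in T, ht = g\,\}$, so the multiplicity of $g$ is $|S|$ and I want $|S| \le n+2$. Since $|T| = 2n+1$, naively $|S| \le$ (number of $t \in T$ with $g t^{-1} \in \operatorname{supp}(T^{(3)})$) counted with multiplicity $c_i$, which is at most $2(2n+1)$ — far too weak, so the refinement must use the structure of $T$ and $T^{(3)}$ more cleverly. The key extra input is that $T^{(3)}$ itself "almost" satisfies the tiling equations: $T^{(3)}$ is obtained from $T = e + \sum_i (a_i + a_i^{-1})$ as $e + \sum_i (a_i^3 + a_i^{-3})$, and collisions among the $a_i^3$ are controlled (this is precisely what Lemma~\ref{lemma6} quantifies via $\beta$). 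I would pair each $t = a_k^{\pm 1} \in T^*$ contributing to $S$ with the element $a_k^{\pm 3} \in T^{(3)}$, and also use $e \in T$, $e \in T^{(3)}$; then a contribution $(h,t)$ to $S$ with $t = a_k^{\pm 1}$ forces $h = g a_k^{\mp 1}$ to lie in $\operatorname{supp}(T^{(3)})$, which means $g a_k^{\mp 1} \in \{e\} \cup \{a_j^{\pm 3}\}$. The case $g a_k^{\mp 1} = e$ happens for at most one or two values of $k$; the generic case $g a_k^{\mp 1} = a_j^{\pm 3}$ means $g = a_k^{\pm 1} a_j^{\pm 3}$. Counting such representations of $g$ is where I would invoke Equation~(\ref{eq1}) (each nonidentity group element has a prescribed, small number of representations as $a_i^{\pm 1}$ or $a_i^{\pm 1} a_j^{\pm 1}$) together with Lemma~\ref{lemma1}, to show the total weighted count is at most $n+2$.

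The main obstacle I anticipate is handling the overlap and the "diagonal" terms cleanly: when $j = k$ one gets $g = a_k^{\pm 1} a_k^{\pm 3} \in \{a_k^4, a_k^2, a_k^{-2}, a_k^{-4}\}$, and these interact with the $\beta$-collisions from Lemma~\ref{lemma6}; also an element $h \in T^{(3)}$ with $c_h = 2$ must be counted twice, and one has to check that such doubled elements do not push the count past $n+2$. I would organize this by first disposing of the contributions where $t = e$ (at most the multiplicity of $g$ in $T^{(3)}$, which is $\le 2$) and where $h = e$ (at most the multiplicity of $g$ in $T$, which is $\le 2$, and these cases overlap only when... ), and then bound the remaining "cross" contributions with $t \in T^*$, $h \in T^{(3)*}$ by a counting argument: the number of ordered pairs $(a_k^{\epsilon}, a_j^{\delta})$ with $\epsilon \in \{\pm 1\}$, $\delta \in \{\pm 3\}$ and $a_k^\epsilon a_j^\delta = g$. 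Splitting into $j \ne k$ and $j = k$, the off-diagonal part should be bounded using that, by Equation~(\ref{eq1}), any fixed $g \ne e$ arises from at most one unordered pair $\{a_j^{\pm 1}, a_k^{\pm 1}\}$ of "genuine" products — transferring this to the twisted product $a_k^\epsilon a_j^{3\delta}$ via the substitution $a_j \mapsto a_j^3$ (valid because $\gcd(3, 2n^2+1)$ may not be $1$, so I would instead argue directly that a collision $a_k^\epsilon a_j^{3\delta} = a_{k'}^{\epsilon'} a_{j'}^{3\delta'}$ with $(j,k) \ne (j',k')$ would, after rewriting, produce a forbidden repetition in $G$). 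If that transfer is too lossy, the fallback is to bound $|S|$ by summing over $k$ the indicator that $g a_k^{-\epsilon} \in \operatorname{supp}(T^{(3)})$ and to show that for at most $n+2$ of the $2n+1$ sign-choices this indicator is nonzero, again by a pigeonhole against Equation~(\ref{eq9})–(\ref{eq10}); I expect the honest proof to be a hybrid of these, and getting the constant exactly $n+2$ (rather than, say, $n+3$) will require squeezing one of the inequalities, likely the one coming from $e$ appearing only once in $T^{(3)}$.
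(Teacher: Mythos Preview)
Your plan attacks the multiplicity of a single element $g$ directly, by trying to enumerate and bound the representations $g=ht$ with $t\in T$ and $h\in\operatorname{supp}(T^{(3)})$. This is not the paper's approach, and as you yourself anticipate, it is very hard to squeeze out the exact constant $n+2$ from such a local, element-by-element count. In fact the obstacles you flag (diagonal terms, doubled elements with $c_h=2$, the failure of $\gcd(3,2n^2+1)=1$ in this congruence class) are real, and nothing in your sketch explains why the count should stop precisely at $n+2$ rather than $n+3$ or $n+4$.

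The paper's argument is \emph{global} and very short. The key observation you are missing is symmetry: since $T=T^{(-1)}$ and $T^{(3)}=(T^{(3)})^{(-1)}$, the multiplicity of $g^{-1}$ in $TT^{(3)}$ equals that of $g$. So if some $g\ne e$ appeared at least $n+3$ times, then $g$ and $g^{-1}$ together contribute at least $2(n+3)$ to $\sum_i i|Z_i|$. Combine this with the already-established floor: by (\ref{eq12}) and (\ref{eq13}), the $2n+1$ elements of $T^{(4)}$ each appear at least $3$ times and the remaining $2n^2-2n$ elements at least $2$ times, contributing at least $3(2n+1)+2(2n^2-2n)$. The excess from $g,g^{-1}$ above their share in this floor is at least $2(n+3)-2\cdot 3$, and adding this in gives
\[
\sum_i i|Z_i|\ \ge\ 2(2n^2-2n)+3(2n+1)+2(n+3)-6\ =\ 4n^2+4n+3,
\]
contradicting (\ref{eq9}). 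That is the entire proof: two lines of arithmetic once you use the $g\leftrightarrow g^{-1}$ symmetry. Your representation-counting route never invokes this symmetry, which is why it cannot see why $n+2$ is the correct threshold.
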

\begin{proof}
  If there exists $g\ne e$ such that $g$ appears at least $n+3$ times in $TT^{(3)}$, then $g^{-1}$ also appears at least $n+3$ times in $TT^{(3)}$. By Equation~(\ref{eq9}), we obtain
 \begin{align*}
&4n^2+4n+1\\
 =&\sum_{i=1}^{L}i|Z_i|\\
 \ge&2(2n^2-2n)+3(2n+1)+2(n+3)-3\cdot2\\
 =&4n^2+4n+3,
 \end{align*}
 which is a contradiction.
\end{proof}

\begin{lemma}\label{lemma7}
If $n\ge7$, then $|\text{supp}(T^{(3)})|=2n+1.$
\end{lemma}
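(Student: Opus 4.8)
The plan is to argue by contradiction. Suppose $|\text{supp}(T^{(3)})|\le 2n$. By Lemma~\ref{lemma3} every element of $G$ has multiplicity at most $2$ in $T^{(3)}$, and $e$ has multiplicity exactly $1$; letting $d$ be the number of elements of multiplicity $2$, these occur in inverse pairs (as $|G|$ is odd), so $d\ge 2$ is even and $|\text{supp}(T^{(3)})|=2n+1-d$. The goal is to show this forces $n$ small, hence is impossible when $n\ge 7$.

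The main engine is a second-moment estimate for $TT^{(3)}$. Writing $T^{(3)}$ with multiplicity as $\{v_0,\dots,v_{2n}\}$ and using $TT^{(3)}=\sum_k v_kT$ together with Lemma~\ref{lemma1} (which gives $|v_kT\cap v_{k'}T|=2n+1$ when $v_k=v_{k'}$, and otherwise $3$ or $2$ according as $v_kv_{k'}^{-1}\in T^{(2)}$ or not), one computes
\[\sum_{g\in G}(TT^{(3)})_g^2 \;=\; 12n^2+8n+1+2d(2n-1)+2\delta,\]
where $\delta\ge 0$ is the number of pairs $\{v_k,v_{k'}\}$ with $v_kv_{k'}^{-1}\in T^{(2)*}$; thus double points strictly inflate $\sum_g(TT^{(3)})_g^2$. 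On the other hand $\sum_g(TT^{(3)})_g=(2n+1)^2$, and by $|Z_0|=|Z_1|=0$ and Lemma~\ref{lemma4} every coefficient lies in $[2,n+2]$ except possibly $(TT^{(3)})_e$. If one can also show $(TT^{(3)})_e\le n+2$, then convexity gives $\sum_g(TT^{(3)})_g^2\le 12n^2+14n+1$; comparing with the displayed formula forces $d(2n-1)\le 3n$, hence $d\le 1$, contradicting $d\ge 2$ once $n\ge 3$. So everything reduces to a bound at $e$.

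To control $(TT^{(3)})_e$, note that $(TT^{(3)})_e=\sum_{g\in T}(T^{(3)})_g=1+2\,|\{j:\ a_j^{3}\in T^{*}\}|$, so it suffices to bound how many $a_j$ have their cube in $T^{*}$. A cheap bound $(TT^{(3)})_e\le 2n+1$ follows already from $\sum_i i|Z_i|=4n^2+4n+1$ together with (\ref{eq12})--(\ref{eq13}) (which, with $\sum_i|Z_i|=2n^2+1$, also show no coefficient of $TT^{(3)}$ is $\equiv1\pmod 3$, so in particular $(TT^{(3)})_e\equiv 0\pmod 3$). To push this down to $n+2$ I would exploit the partition~(\ref{eq1}): if $a_j^{3}=a_i$ then the four cross terms of the pair $\{i,j\}$ in~(\ref{eq1}) are forced to equal $a_j^{2},a_j^{-2},a_j^{4},a_j^{-4}$, and the requirement that these be pairwise distinct and distinct from every other term of the partition, together with $\beta=0$ (which forbids any $a_l^{2}$ from lying in $T^{*}$), sharply limits how many such coincidences can coexist. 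A complementary clean tool is the ring endomorphism $A\mapsto A^{(3)}$ of $\mathbb{Z}[G]$ applied to Condition~(4) of Theorem~\ref{groupring}: this gives $(T^{(3)})^{2}=2G^{(3)}+T^{(6)}+(2n-2)e$, and reading off the coefficient of $e$ (using $(T^{(6)})_e=1$, since $\beta=0$ means no $a_i$ has order $3$) yields $\sum_g(T^{(3)})_g^{2}=2\,|\{g\in G:g^{3}=e\}|+2n-1$, i.e.\ $d=|\{g:g^{3}=e\}|-1$ exactly, which feeds back into the estimates above.

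I expect the main obstacle to be precisely the bound $(TT^{(3)})_e\le n+2$: Lemma~\ref{lemma4} controls coefficients only away from $e$, and a value near $2n+1$ there would destroy the second-moment comparison, so a genuine structural input (the cross-term analysis, or an auxiliary group-ring identity of the kind above) is required. Once that bound is secured, the remaining work — the convexity bookkeeping and the single small case $n=4$ falling outside the hypothesis $n\ge 7$ — is routine.
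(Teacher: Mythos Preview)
Your second-moment framework is set up correctly, and the identity
\[
\sum_{g\in G}(TT^{(3)})_g^{2}=12n^{2}+8n+1+2d(2n-1)+2\delta
\]
is valid. However, the proof as it stands has a genuine gap precisely where you flag it: the bound $(TT^{(3)})_e\le n+2$ is essential for the convexity step, and neither of your proposed fixes delivers it. The cross-term analysis via~(\ref{eq1}) is only a heuristic; knowing that $a_j^{3}=a_i$ forces the four products $a_i^{\pm1}a_j^{\pm1}$ to equal $a_j^{\pm2},a_j^{\pm4}$ does not by itself limit how many indices $j$ can satisfy $a_j^{3}\in T^{*}$. The ring-endomorphism identity is correct and gives $d=|G_{3}|-1$, but this pins down $d$, not $c_e=(TT^{(3)})_e$; it says nothing about how many elements of $T$ have their cube back in $T$. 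In fact nothing prior to this lemma excludes $c_e$ being as large as $2n+1$ (this is exactly the case $T=T^{(3)}$ that the paper rules out only \emph{after} establishing the present lemma, in the proof of Proposition~\ref{prop2}). A short calculation shows that if one separates $e$ in your convexity inequality and inserts only the trivial bound $c_e\le 2n+1$, the resulting inequality no longer forces $d\le 1$, so the argument collapses.

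The paper's proof avoids this difficulty altogether by working with the \emph{first} moment instead. Assuming some $g\ne e$ has multiplicity two in $T^{(3)}$, it lower-bounds $\sum_{h\in gT\cup g^{-1}T}(TT^{(3)})_h$ by combining (i) the mass $4(2n+1)$ coming from $g$ and $g^{-1}$, (ii) the mass from the remaining $2n-3$ terms of $T^{(3)}$ into $gT$ and into $g^{-1}T$ (each at least $2$ by Lemma~\ref{lemma1}), and (iii) Lemma~\ref{lemma4}, which caps $(TT^{(3)})_h$ for $h\ne e$ and hence bounds the possible overlap on $gT\cap g^{-1}T$. This yields $\sum_{h\in gT\cup g^{-1}T}(TT^{(3)})_h\ge 13n-5$, and comparing with $\sum_h(TT^{(3)})_h=4n^{2}+4n+1$ and the fact that every coefficient outside $gT\cup g^{-1}T$ is at least $2$ gives a contradiction for $n\ge 7$. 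The point is that only Lemma~\ref{lemma4} (a bound for $h\ne e$) is needed; no control of $c_e$ is required.
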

\begin{proof}
If there exists $g\in G$, $g\ne e$ such that $g$ appears twice in $T^{(3)}$, then $g^{-1}$ also appears twice in $T^{(3)}$. By Lemma~\ref{lemma1}, for any $g'\in T^{(3)}$, $g'\ne g,g^{-1}$, we have $|gT\cap g'T|\ge2$ and $|g^{-1}T\cap g'T|\ge2$. By Lemma~\ref{lemma4}, we have that the elements in $gT\cup g^{-1}T$ appear at least $2(2n+1)+2(2n+1)+2(2n+1-4)+(2n+1-4-(n+2-2))=13n-5$ times in $TT^{(3)}$. By Equation~(\ref{eq9}), we have
 \begin{align*}
&4n^2+4n+1\\
 =&\sum_{i=1}^{L}i|Z_i|\\
 \ge&2(2n^2+1-|gT\cup g^{-1}T|)+13n-5\\
 \ge&2(2n^2+1-4n)+13n-5\\
 =&4n^2+5n-3,
 \end{align*}
this is possible only for $n=4$.
\end{proof}

With a similar discussion as Lemma~\ref{lemma2}, we have the following lemma.
\begin{lemma}\label{lemma5}
If $n\ge7$, then we have $\sum_{i=1}^{L}|Z_i|=1-\delta+\sum_{i=3}^{L}\frac{(i-1)(i-2)}{2}|Z_i|$, where $2\delta+1=|T\cap T^{(3)}|$.
\end{lemma}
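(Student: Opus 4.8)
The plan is to repeat the inclusion--exclusion count performed for $TT^{(2)}$ in Lemma~\ref{lemma2}, now applied to $TT^{(3)}$. Since $n\ge 7$, Lemma~\ref{lemma7} gives $|\text{supp}(T^{(3)})|=2n+1$, so $T^{(3)}$ is a $(2n+1)$-element subset of $G$; as it is symmetric (because $T=T^{(-1)}$) and contains $e$, we may write $T^{(3)}=\sum_{i=0}^{2n}c_i$ with $c_0=e$ and $c_i^{-1}=c_{n+i}$ for $1\le i\le n$, exactly paralleling the decomposition $T^{(2)}=\sum b_i$ of Lemma~\ref{lemma2}. Then $TT^{(3)}=\sum_{i=0}^{2n}c_iT$, so $g$ lies in $Z_j$ precisely when it lies in $j$ of the translates $c_iT$, and by Lemma~\ref{lemma1} we have, for $i\ne j$, $|c_iT\cap c_jT|=3$ if $c_ic_j^{-1}\in T^{(2)}$ and $|c_iT\cap c_jT|=2$ otherwise. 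Setting $\alpha'=|\{(i,j):0\le i<j\le 2n,\ c_ic_j^{-1}\in T^{(2)}\}|$, the same bookkeeping as in Lemma~\ref{lemma2} (an element of $Z_j$ contributing $\frac{(j-1)(j-2)}{2}$ from the terms of order $\ge 3$) yields
\[\sum_{i=1}^{L}|Z_i|=(2n+1)-\alpha'+\sum_{i=3}^{L}\frac{(i-1)(i-2)}{2}|Z_i|,\]
so the lemma is equivalent to the identity $\alpha'=2n+\delta$.

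To evaluate $\alpha'$ I would mimic the computation of $\alpha$ in Lemma~\ref{lemma2}: write $\alpha'=\sum_{t\in T^{(2)*}}N'(t)$ with $N'(t)=|\{(i,j):i<j,\ c_ic_j^{-1}=t\}|$, pair each $t\in T^{(2)*}$ with $t^{-1}$ (there are no involutions since $|G|$ is odd), so that $2\alpha'=\sum_{t\in T^{(2)*}}(N'(t)+N'(t^{-1}))$, and note that $N'(t)+N'(t^{-1})$ is the coefficient of $t$ in $(T^{(3)})^2$. Here the argument must genuinely diverge from Lemma~\ref{lemma2}: there, $T^{(2)}$ again satisfied the hypotheses of Theorem~\ref{groupring} because squaring is an automorphism of $G$, which forced $N(t)+N(t^{-1})\in\{2,3\}$ according as $t\in T^{(4)}$ or not; but for $n\equiv 1\pmod 3$ we have $3\mid 2n^2+1=|G|$, so cubing is \emph{not} an automorphism and $T^{(3)}$ is not a tiling set. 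I would get around this using the identity $(T^{(3)})^2=(T^2)^{(3)}$, valid in any abelian group, which by Condition~(4) of Theorem~\ref{groupring} gives $(T^{(3)})^2=\bigl(2G+T^{(2)}+(2n-2)e\bigr)^{(3)}=2G^{(3)}+T^{(6)}+(2n-2)e$; reading off the coefficients on $T^{(2)*}$ and summing, one uses that $T^{(6)}$ is multiplicity free (squaring is injective and $T^{(3)}$ is multiplicity free by Lemma~\ref{lemma7}), that $a_l^{\pm 2}\in T^{(6)*}$ exactly when $a_l\in T^{(3)*}$, the count $|\{l:a_l\in T^{(3)*}\}|=\delta$, and Lemma~\ref{lemma6}, which here makes $e$ occur exactly once in $T^{(3)}$.

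I expect the main obstacle to be controlling the term $2G^{(3)}$: this is exactly where the failure of cubing to be injective is concentrated (its support is the subgroup of cubes, each element carried with multiplicity $|\{g\in G:g^3=e\}|$), and one must show that its contribution to the coefficients indexed by $T^{(2)*}$ is already accounted for by $T^{(3)*}$, so that $2\alpha'$ collapses to $2(2n+\delta)$. Once $\alpha'=2n+\delta$ is in hand, the lemma follows immediately from the displayed inclusion--exclusion identity, the remaining ingredients (the inclusion--exclusion itself, the $t\leftrightarrow t^{-1}$ pairing, and the passage between $T$ and $T^{(3)}$ via cubing) being routine adaptations of Lemma~\ref{lemma2}.
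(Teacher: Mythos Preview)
Your inclusion--exclusion set-up is correct and cleanly reduces the lemma to $\alpha'=2n+\delta$, and your identity $(T^{(3)})^2=(T^2)^{(3)}=2G^{(3)}+T^{(6)}+(2n-2)e$ is right. The $T^{(6)}$ contribution does give $2\delta$ as you compute. But ``one must show that the $2G^{(3)}$ contribution is already accounted for'' is where your argument stops, and this is a real gap: that contribution to $2\alpha'$ is exactly $2|K|\cdot|T^{(2)*}\cap G^3|$ with $K=\{g\in G:g^3=e\}$, so $\alpha'=|K|\cdot|T^{(2)*}\cap G^3|+\delta$, and you need $|K|\cdot|T^{(2)*}\cap G^3|=2n$. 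For $|K|\ge 3$ this cannot be obtained by bookkeeping (for $n\equiv 1\pmod 3$ the right-hand side is not even a multiple of $3$), so some genuine input is required.

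The missing observation is that the standing hypotheses force $|K|=1$. If $k\in G$ with $k\ne e$ and $k^3=e$, then by Lemma~\ref{lemma1} there exist $s,t\in T$ with $st=k$; hence $s^3=(t^{-1})^3$ with $s,t^{-1}\in T$, and Lemma~\ref{lemma7} (cubing injective on $T$) gives $s=t^{-1}$, so $k=e$, a contradiction. Thus $K=\{e\}$, $G^{(3)}=G$, and then $(T^{(3)})^2=2G+T^{(6)}+(2n-2)e$, so $T^{(3)}$ satisfies Condition~(4) of Theorem~\ref{groupring} and the computation of $\alpha'$ is literally that of Lemma~\ref{lemma2}, yielding $\alpha'=2n+\delta$. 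The paper's ``proof'' is only the sentence ``with a similar discussion as Lemma~\ref{lemma2}'', which glosses over precisely the point you flagged; once one fills it as above, the argument is identical. (As a bonus, $|K|=1$ means $3\nmid 2n^2+1$, which already contradicts $n\equiv 1\pmod 3$; so in this subsection Lemma~\ref{lemma5} is vacuously true and the observation actually proves Proposition~\ref{prop2} outright.)
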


\begin{proposition}\label{prop2}
  Theorem~\ref{mainthm} holds for $n\equiv1\pmod{3}$ except $n=4$.
\end{proposition}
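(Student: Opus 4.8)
The plan is to run the argument of Proposition~\ref{prop1} in the present setting, now using $TT^{(3)}\pmod 3$ (Equation~(\ref{eq11})) with the roles of $T^{(2)},T^{(3)}$ taken over by $T^{(3)},T^{(4)}$. Since $n=4$ is the only integer with $n\equiv1\pmod3$ and $3\le n<7$, I may assume $n\ge7$, so that Lemmas~\ref{lemma3}, \ref{lemma4}, \ref{lemma5} and~\ref{lemma7} are available, and suppose for contradiction that $G$ and $T$ as in Theorem~\ref{groupring} exist. Throughout, $G=\sum_{i=0}^{L}Z_i$ and $TT^{(3)}=\sum_{i=0}^{L}iZ_i$ are as in the paragraph following Equation~(\ref{eq11}), with $L$ the largest index having $Z_L\ne\emptyset$ and $|Z_0|=|Z_1|=0$.

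First I would record two counting identities. From Equations~(\ref{eq9}), (\ref{eq10}) and $|Z_0|=|Z_1|=0$ one obtains $\sum_{i\ge3}(i-2)|Z_i|=4n-1$, and combining Lemma~\ref{lemma5} with~(\ref{eq10}) gives $\sum_{i\ge3}\frac{(i-1)(i-2)}{2}|Z_i|=2n^2+\delta$, where $2\delta+1=|T\cap T^{(3)}|$. Bounding $\frac{(i-1)(i-2)}{2}\le\frac{L-1}{2}(i-2)$ for $3\le i\le L$ then yields $2n^2+\delta\le\frac{L-1}{2}(4n-1)$, hence $L\ge n+2$. I would also note three structural facts: by Lemma~\ref{lemma7} the cubing map is injective on $T$, so the multiplicity of $e$ in $TT^{(3)}$ equals $|\{h\in T:h^3\in T\}|=|T\cap T^{(3)}|=2\delta+1$; since $e\in\text{supp}(T^{(4)})$, Equation~(\ref{eq11}) forces this number to be $\equiv0\pmod3$; and Equations~(\ref{eq10}), (\ref{eq12}), (\ref{eq13}) give $\sum_i|Z_{3i+1}|=0$, so every element of $G$ occurs in $TT^{(3)}$ a number of times $\equiv0$ or $\equiv2\pmod3$, the former happening for exactly the $2n+1$ elements of $\text{supp}(T^{(4)})$ (so $T^{(4)}$ is a set), each occurring at least $3$ times.

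The argument then splits on $|Z_L|$. \emph{Case $|Z_L|\ge2$.} Since $Z_L=Z_L^{(-1)}$ and $|G|$ is odd, $Z_L$ contains a non-identity element, so Lemma~\ref{lemma4} gives $L\le n+2$ and hence $L=n+2$. Because $n+2\equiv0\pmod3$ while an element outside $\text{supp}(T^{(4)})$ occurs $\equiv2\pmod3$ times, $Z_{n+2}\subseteq\text{supp}(T^{(4)})$; feeding into $\sum_{i\ge3}(i-2)|Z_i|=4n-1$ that the $2n$ non-identity elements of $\text{supp}(T^{(4)})$ each occur $\ge3$ times gives $(n-1)|Z_{n+2}|\le2(n-1)$, so $|Z_{n+2}|=2$, $|Z_3|=2n-1$, $|Z_i|=0$ for $4\le i\le n+1$, and $e\in Z_3$ (whence $\delta=1$). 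Substituting into $\sum_{i\ge3}\frac{(i-1)(i-2)}{2}|Z_i|=2n^2+\delta$ gives $n^2+3n-1=2n^2+1$, i.e.\ $n^2-3n+2=0$, impossible for $n\ge7$. \emph{Case $|Z_L|=1$.} Then $Z_L=\{e\}$, so $L=2\delta+1\equiv0\pmod3$; the analogue of the endgame of Proposition~\ref{prop1}, using the guaranteed collisions $t^4=t\cdot t^3$ (for $t\in\text{supp}(T^{(4)})$) together with the residue information above, forces $|Z_i|=0$ for $3<i<L$ and $|Z_3|=4n+1-L$, after which $\sum_{i\ge3}\frac{(i-1)(i-2)}{2}|Z_i|=2n^2+\delta$ becomes $L^2-6L-(4n^2-8n-5)=0$; its only positive root is $L=2n+1$, so $\delta=n$, $|T\cap T^{(3)}|=|T|$, and $T=T^{(3)}$. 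But $n\equiv1\pmod3$ forces $3\mid2n^2+1=|G|$, so $G$ has an element of order $3$ and $\{x^3:x\in G\}$ is a proper subgroup of $G$; it contains $T^{(3)}=T$, contradicting that $T$ contains $a_1^{\pm1},\dots,a_n^{\pm1}$, which generate $G$.

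The two cases together contradict the existence of $G,T$, proving the proposition. The hard part will be the second case: turning the inequality chain into an equality there needs exact (not merely congruential) multiplicity information for $TT^{(3)}$, which rests on $T^{(3)}$ and $T^{(4)}$ being honest sets, on Lemma~\ref{lemma4}, and on Equations~(\ref{eq12})--(\ref{eq13}) in combination; one must also check that the Diophantine equations obtained in both cases fail for every $n\ge7$, so that $n=4$ is genuinely the only value left to be handled (by the symmetric-polynomial method).
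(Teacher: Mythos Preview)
Your Case~1 argument is correct, and your final contradiction in Case~2 (using $3\mid|G|$, so that $\{x^{3}:x\in G\}$ is a proper subgroup containing $T^{(3)}=T$, while $T$ generates $G$) is valid and in fact more direct than the paper's route through $T^{(2)}=T^{(4)}$ and Lemmas~\ref{lemma6},~\ref{lemma7}. However, there is a genuine gap in the heart of your Case~2: the structure claim ``$|Z_i|=0$ for $3<i<L$ and $|Z_3|=4n+1-L$'' is not justified by the analogy with Proposition~\ref{prop1}. In Case~2 of Proposition~\ref{prop1} the congruence $TT^{(2)}\equiv T+T^{(3)}+2G\pmod 3$ puts \emph{two} families, $T^{*}$ and $T^{(3)*}$, on the right-hand side, and the count $|X_3|=4n+1-M$ comes from $|T^{*}\cup T^{(3)*}|$. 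Here the congruence~(\ref{eq11}) is $TT^{(3)}\equiv 2G+T^{(4)}\pmod 3$, with only $T^{(4)}$ on the right; only the $2n+1$ elements of $T^{(4)}$ are forced to multiplicity $\ge 3$, so the figure $4n+1-L$ has no source. The inequality $\sum_{i\ge3}(i-2)|Z_i|\ge (L-2)+2n$ yields only $L\le 2n+1$, and combined with your lower bound $L\ge n+2$ this still leaves many odd multiples of~$3$ for $L$.

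The paper avoids the case split entirely by extracting one more identity from the residue data: combining~(\ref{eq9}) with~(\ref{eq12}) and~(\ref{eq13}) gives Equation~(\ref{eq14}),
\[
\sum_{i\ge 2}(3i-3)|Z_{3i}|+\sum_{i\ge 1}3i|Z_{3i+2}|=2n-2,
\]
which bounds $\sum_{i\ge3}(i-3)|Z_i|\le 2n-2$. Feeding this into $\sum_{i\ge3}\frac{(i-3)(i-2)}{2}|Z_i|=2n^2+\delta-4n+1$ (from Lemma~\ref{lemma5}) gives $2n^2+\delta-4n+1\le(L-2)(n-1)$, hence $L\ge 2n$; then~(\ref{eq14}) forces $|Z_L|=1$, and $L\equiv 0\pmod 3$ gives $L=2n+1$ immediately. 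If you insert this step, your Case~1 becomes unnecessary and your Case~2 endgame (including the nice $3\mid|G|$ contradiction) goes through.
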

\begin{proof}
By Equations (\ref{eq9}), (\ref{eq10}), (\ref{eq12}) and (\ref{eq13}), we get
\begin{align}
&\sum_{i=3}^{L}(i-2)|Z_i|=\sum_{i=2}^{L}i|Z_i|-2\sum_{i=2}^{L}|Z_i|=4n-1,\\
&\sum_{i\ge2}(3i-3)|Z_{3i}|+\sum_{i\ge1}3i|X_{3i+2}|=\sum_{i=3}^{L}(i-2)|Z_i|-\sum_{i\ge1}|Z_{3i}|=2n-2.\label{eq14}
\end{align}
From Lemma~\ref{lemma5}, we obtain
\begin{align*}
2n^2+\delta-4n+1=&\sum_{i=3}^{L}\frac{(i-1)(i-2)}{2}|Z_i|-\sum_{i=3}^{L}(i-2)|Z_i|\\
=&\sum_{i=3}^{L}\frac{(i-3)(i-2)}{2}|Z_i|\\
\le&\frac{L-2}{2}\sum_{i=3}^{L}(i-3)|Z_i|\\
\le&\frac{L-2}{2}(2n-2).
\end{align*}
This leads to $L\ge2n$. From Equation (\ref{eq14}), we have $|Z_L|=1$ and then $Z_L=\{e\}$. Equation (\ref{eq12}) implies that $L\equiv0\pmod{3}$, so $L=2n+1$. Hence $T=T^{(3)}$. Comparing Equation (\ref{eq11}) with Condition (4) in Theorem~\ref{groupring}, we obtain $T^{(2)}=T^{(4)}$, which contradicts to Lemmas~\ref{lemma6} and \ref{lemma7}.
\end{proof}

\subsection{$n\equiv0\pmod{3}$}
Since $\gcd(3,2n^2+1)=1$, then $|\text{supp}(T^{(3)})|=2n+1$. So $\beta=0$, i.e., $|T\cap T^{(2)}|=1$. Hence $e\in X_1$. By Equation~(\ref{eq4}), we obtain
\begin{align*}
T^{(2)}T\equiv T^{(3)}+G+2T\pmod{3}.
\end{align*}
This implies that for any $g\in G\backslash(T\cup T^{(3)})$, $g$ appears at least once in $T^{(2)}T$, and for any $t\in T$, $t=t^{-1}t^2\in T^{(2)}T$, $t^3=tt^2\in T^{(2)}T$. Hence $|X_0|=0$. Moreover,
\begin{align}
&\sum_{i\ge1}|X_{3i}|=a,\label{eq16}\\
&\sum_{i\ge0}|X_{3i+2}|=a,\\
&\sum_{i\ge0}|X_{3i+1}|=2n^2+1-2a,\label{eq15}
\end{align}
where $a=|T\backslash(T\cap T^{(3)})|=|T^{(3)}\backslash(T\cap T^{(3)})|$. Then $a\le2n$. By Lemma~\ref{lemma2}, we have
\begin{align*}
\sum_{i\ge1}3i(|X_{3i}|+|X_{3i+1}|+|X_{3i+2}|)=&\sum_{i=1}^{M}i|X_i|-\sum_{i\ge0}|X_{3i+1}|-2\sum_{i\ge0}|X_{3i+2}|\\
=&(4n^2+4n+1)-(2n^2+1-2a)-2a\\
=&2n^2+4n,
\end{align*}
and
\begin{align*}
2n^2=&\sum_{i=3}^{M}\frac{(i-1)(i-2)}{2}|X_i|\\
\ge&-2|X_3|+\sum_{i\ge1}3i(|X_{3i}|+|X_{3i+1}|+|X_{3i+2}|)\\
=&-2|X_3|+2n^2+4n.
\end{align*}
This leads to $|X_3|\ge2n$. Combining with Equations (\ref{eq16})-(\ref{eq15}) and $a\le2n$, we have
\begin{align*}
|X_1|=\frac{4}{3}n^2-\frac{10}{3}n+1,\ |X_2|=2n,\ |X_3|=2n,\ |X_4|=\frac{2}{3}n^2-\frac{2}{3}n\text{ and }|X_i|=0\text{ for all }i\ge5.
\end{align*}
Moreover
\begin{align*}
T^{(2)}T=X_1+2T^{(3)*}+3T^{*}+4X_4.
\end{align*}

In order to get a contradiction, we need to investigate $TT^{(4)}$. By Theorem~\ref{groupring}, we have
\begin{align*}
(T^{(2)})^2=&(2G+(2n-2)e-T^2)^2\\
=&(8n^2+8n-4)G+(4n^2-8n+4)e-(4G+(4n-4)e)T^2+T^4\\
=&(8n^2+8n-4)G+(4n^2-8n+4)e-(4G+(4n-4)e)(2G+T^{(2)}+(2n-2)e)+T^4\\
=&(-8n^2-16n)G+(-4n^2+8n-4)e-(4n-4)T^{(2)}+T^4.
\end{align*}
On the other hand, $T^{(2)}$ also satisfies conditions in Theorem~\ref{groupring}, then
\[(T^{(2)})^2=2G+T^{(4)}+(2n-2)e.\]
Combining above two equations, we obtain
 \[T^4=(8n^2+16n+2)G+(4n^2-6n+2)e+T^{(4)}+(4n-4)T^{(2)}.\]
  Multiplying $T$ to both sides, we can get
\begin{align}\label{eq17}
TT^{(4)}=T^{5}-(16n^3+40n^2+20n+2)G-(4n^2-6n+2)T-(4n-4)TT^{(2)}.
\end{align}
Writing $TT^{(4)}$ as
\begin{align*}
TT^{(4)}=\sum_{i=0}^{N}iY_i,
\end{align*}
where $N\le2n+1$, and $Y_i$, $i=0,1,\dots,N$ form a partition of group $H$, i.e.,
\begin{align*}
H=\sum_{i=0}^{N}Y_i.
\end{align*}
Then we have the following lemma.
\begin{lemma}\label{lemma8}
  \begin{enumerate}
    \item [(1)] $\sum_{i=0}^{N}|Y_i|=2n^2+1$,
    \item [(2)] $\sum_{i=1}^{N}i|Y_i|=4n^2+4n+1,$
    \item [(3)] $\sum_{i=1}^{N}|Y_i|=1-\gamma+\sum_{i\ge3}|Y_i|\frac{(i-1)(i-2)}{2}$, where $|T^{*}\cap T^{(4)*}|=2\gamma$.
  \end{enumerate}
\end{lemma}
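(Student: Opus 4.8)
The plan is to imitate the proof of Lemma~\ref{lemma2} almost verbatim, replacing the exponents $2$ and $4$ throughout by $4$ and $8$. Parts (1) and (2) need no computation: the sets $Y_{i}$ partition $G$, which has order $2n^{2}+1$, and this is (1); comparing the total coefficient sums on the two sides of $TT^{(4)}=\sum_{i=0}^{N}iY_{i}$ gives $\sum_{i=1}^{N}i|Y_{i}|=|T|\cdot|T^{(4)}|=(2n+1)^{2}=4n^{2}+4n+1$, which is (2).

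For (3) I would first record that $T^{(4)}$ again satisfies all four conditions of Theorem~\ref{groupring}. Since $2n^{2}+1$ is odd, $g\mapsto g^{2}$ is an automorphism of $G$, so $T^{(2)}$ satisfies these conditions (as already used in Lemma~\ref{lemma2}), and applying this once more gives the same for $T^{(4)}=(T^{(2)})^{(2)}$; in particular $|T^{(4)}|=2n+1$. Write $T^{(4)}=\sum_{i=0}^{2n}c_{i}$ with $c_{0}=e$ and $c_{i}^{-1}=c_{n+i}$ for $1\le i\le n$, so that $TT^{(4)}=\sum_{i=0}^{2n}c_{i}T$; then, exactly as in Lemma~\ref{lemma2}, Lemma~\ref{lemma1} applied to $T$ gives, for $i\ne j$, that $|c_{i}T\cap c_{j}T|=3$ if $c_{i}c_{j}^{-1}\in T^{(2)}$ and $|c_{i}T\cap c_{j}T|=2$ otherwise. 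Put $\alpha=|\{(i,j):0\le i<j\le 2n,\ c_{i}c_{j}^{-1}\in T^{(2)}\}|$ and $N(t)=|\{(i,j):0\le i<j\le 2n,\ c_{i}c_{j}^{-1}=t\}|$, so $\alpha=\sum_{t\in T^{(2)}}N(t)$. Because each $c_{j}^{-1}$ again lies in $T^{(4)}$, for $t\ne e$ the number $N(t)+N(t^{-1})$ counts the ordered factorizations $t=s_{1}s_{2}$ with $s_{1},s_{2}\in T^{(4)}$, that is, the coefficient of $t$ in $(T^{(4)})^{2}=2G+T^{(8)}+(2n-2)e$; hence it equals $3$ if $t\in T^{(8)*}$ and $2$ otherwise. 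Pairing $t$ with $t^{-1}$ in $T^{(2)*}$ (recall $T^{(2)}=T^{(-2)}$, $|T^{(2)*}|=2n$, and $N(e)=0$), and using the automorphism $g\mapsto g^{2}$ to get $|T^{(2)*}\cap T^{(8)*}|=|T^{*}\cap T^{(4)*}|=2\gamma$, I would obtain $2\alpha=3(2\gamma)+2(2n-2\gamma)=4n+2\gamma$, so $\alpha=2n+\gamma$.

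Finally I would run the inclusion-exclusion for $\sum_{i=1}^{N}|Y_{i}|=|\text{supp}(TT^{(4)})|$ exactly as in Lemma~\ref{lemma2}. The singleton and pairwise terms contribute $\sum_{i=0}^{2n}|c_{i}T|-\sum_{0\le i<j\le 2n}|c_{i}T\cap c_{j}T|=(4n^{2}+4n+1)-\bigl(3\alpha+2(\binom{2n+1}{2}-\alpha)\bigr)=1-\gamma$, using $\alpha=2n+\gamma$ and $\binom{2n+1}{2}=n(2n+1)$; and an element appearing exactly $i\ge 3$ times in $TT^{(4)}$, equivalently lying in exactly $i$ of the sets $c_{j}T$, contributes $\binom{i}{2}-\binom{i}{1}+\binom{i}{0}=\frac{(i-1)(i-2)}{2}$ to the higher-order terms, which therefore sum to $\sum_{i\ge 3}\frac{(i-1)(i-2)}{2}|Y_{i}|$. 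Adding the two pieces gives (3). The step I expect to be most delicate is the bookkeeping in the second paragraph: checking that $T^{(4)}$ inherits the hypotheses of Theorem~\ref{groupring}, that $N(t)+N(t^{-1})$ is genuinely the number of $T^{(4)}$-factorizations of $t$ (which is where $t\ne e$ is used, to rule out degenerate index pairs), and that the squaring automorphism correctly identifies $|T^{(2)*}\cap T^{(8)*}|$ with $|T^{*}\cap T^{(4)*}|=2\gamma$. Everything else is a routine transcription of the $TT^{(2)}$ computation from Lemma~\ref{lemma2}.
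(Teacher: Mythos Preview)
Your proposal is correct and follows exactly the approach the paper intends: the paper's own proof simply says ``The proof is similar to that for Lemma~\ref{lemma2},'' and your transcription---replacing $T^{(2)}$ by $T^{(4)}$, using $(T^{(4)})^{2}=2G+T^{(8)}+(2n-2)e$, and invoking the squaring automorphism to identify $|T^{(2)*}\cap T^{(8)*}|$ with $|T^{*}\cap T^{(4)*}|=2\gamma$---is precisely that similar proof written out in full. The delicate points you flag (distinctness of the $c_{i}$, the $t\ne e$ restriction in the factorization count) are handled correctly.
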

\begin{proof}
  The proof is similar to that for Lemma~\ref{lemma2}.
\end{proof}

\begin{proposition}\label{prop3}
  Theorem~\ref{mainthm} holds for $n\equiv6\pmod{15}.$
\end{proposition}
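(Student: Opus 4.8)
The plan is to carry out the promised modulo-$5$ study of $TT^{(4)}$. Since $n\equiv6\pmod{15}$ is precisely the conjunction $n\equiv0\pmod3$ and $n\equiv1\pmod5$, I may freely use everything proved in this subsection: the distribution of the $X_i$, the identity $TT^{(2)}=G+T^{(3)*}+2T^{*}+3X_4$ in which $T^{(3)*},T^{*},X_4$ are pairwise disjoint, and the fact that $\gcd(5,2n^2+1)=1$ (because $2n^2+1\equiv3\pmod5$), so that $g\mapsto g^{5}$ is an automorphism of $G$ and $T^{(5)}$ is again a $(2n+1)$-subset with $e\in T^{(5)}$ satisfying the conditions of Theorem~\ref{groupring}. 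First I would reduce Equation~(\ref{eq17}) modulo $5$: using $T^{5}\equiv T^{(5)}\pmod5$ together with the congruences $16n^{3}+40n^{2}+20n+2\equiv3$, $4n^{2}-6n+2\equiv0$ and $4n-4\equiv0\pmod5$, all valid for $n\equiv1\pmod5$, I obtain $TT^{(4)}\equiv T^{(5)}+2G\pmod5$.

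Next I would exploit that every coefficient of $TT^{(4)}$ is a non-negative integer at most $2n+1$. The congruence then forces each coefficient to be $\equiv3\pmod5$ on $T^{(5)}$ and $\equiv2\pmod5$ off $T^{(5)}$, so $TT^{(4)}=T^{(5)}+2G+5D$ for some $D=\sum_g d_g g\in\mathbb{Z}_{\ge0}[G]$ with $D=D^{(-1)}$; comparing total weights via Lemma~\ref{lemma8}(2) gives $\sum_g d_g=\frac{2n-2}{5}$. Moreover the coefficient of $e$ in $TT^{(4)}$ equals $|T\cap T^{(4)}|=2\gamma+1$, where $2\gamma=|T^{*}\cap T^{(4)*}|$; being $\equiv3\pmod5$, this forces $\gamma\equiv1\pmod5$, and writing $\gamma=5j+1$ we get $d_e=2j$. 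Since every coefficient is at least $2$, the support of $TT^{(4)}$ is all of $G$, so Lemma~\ref{lemma8}(3) reads $\sum_{i\ge3}\frac{(i-1)(i-2)}{2}|Y_i|=2n^{2}+\gamma$; expanding the left side in terms of the coefficients $3+5d_g$ (on $T^{(5)}$) and $2+5d_g$ (off $T^{(5)}$) should collapse to the single identity $5P+\tfrac{25}{2}R=2n^{2}-3n+\gamma$, with $P=\sum_{g\in T^{(5)}}d_g$ and $R=\sum_g d_g^{2}$.

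The heart of the argument is then a rigidity squeeze. From $d_e\le P\le\sum_g d_g=\frac{2n-2}{5}$, from $R\le d_e^{2}+2\left(\tfrac{(2n-2)/5-d_e}{2}\right)^{2}$ (the remaining mass $\frac{2n-2}{5}-2j$ of $D$ must be distributed over inverse-pairs, so its sum of squares is maximised by loading a single pair), and from the identity above, I expect to obtain a quadratic inequality in $j$ whose discriminant is a perfect square and whose only admissible root in the range $0\le j\le\frac{n-1}{5}$ is $j=\frac{n-1}{5}$. This would force $\gamma=n$ and $D=\frac{2n-2}{5}\,e$; equivalently $T^{*}=T^{(4)*}$, i.e.\ $T=T^{(4)}$, together with $5D=(2n-2)e$.

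To conclude, $T=T^{(4)}$ gives $TT^{(4)}=T^{2}=2G+T^{(2)}+(2n-2)e$, which compared with $TT^{(4)}=T^{(5)}+2G+(2n-2)e$ yields $T^{(2)}=T^{(5)}$; and $T=T^{(4)}$ also gives $TT^{(2)}=T^{(2)}T^{(4)}=(TT^{(2)})^{(2)}$. Substituting $TT^{(2)}=G+T^{(3)*}+2T^{*}+3X_4$ and using that $T^{(3)*},T^{*},X_4$ are pairwise disjoint (so that the coefficient function of this element is determined by its level sets), comparison of coefficients in $G+T^{(3)*}+2T^{*}+3X_4=G+T^{(6)*}+2T^{(2)*}+3X_4^{(2)}$ forces $T^{*}=T^{(2)*}$, that is $T=T^{(2)}$. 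Then, exactly as at the end of the proof of Proposition~\ref{prop1}, $T^{*}$ is a $(2n^{2}+1,2n,1,2)$ regular partial difference set, so by \cite[Theorem~6.1]{M1994} we must have $n=11$, contradicting $n\equiv6\pmod{15}$. The main obstacle is the rigidity squeeze of the third paragraph: one has to check both that the modulo-$5$ reduction is genuinely clean (the vanishing of the $T$- and $TT^{(2)}$-terms is special to the residue $n\equiv1\pmod5$) and that the resulting quadratic inequality is tight exactly at the endpoint $j=\frac{n-1}{5}$, leaving no slack.
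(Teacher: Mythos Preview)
Your proposal is correct but follows a different route from the paper. Both begin with the reduction $TT^{(4)}\equiv T^{(5)}+2G\pmod5$ and aim to force $T=T^{(4)}$. The paper does this quickly via the partition $TT^{(4)}=\sum iY_i$: the two linear identities give $\sum_{i\ge1}5i(|Y_{5i+2}|+|Y_{5i+3}|)=2n-2$, and Lemma~\ref{lemma8}(3) then yields $N\ge 2n$; from $2n-2$ one reads $|Y_N|=1$, so $Y_N=\{e\}$, $N$ is odd, hence $N=2n+1$ and $T=T^{(4)}$. Your route via $TT^{(4)}=T^{(5)}+2G+5D$ and the exact identity $5P+\tfrac{25}{2}R=2n^{2}-3n+\gamma$ also works: with $P\le\frac{2n-2}{5}$ and $R\le d_e^{2}+\tfrac12\bigl(\frac{2n-2}{5}-d_e\bigr)^{2}$ one obtains $75j^{2}-(10n-5)j-(n-1)(n-2)\ge0$, whose discriminant is $(20n-25)^{2}$, giving roots $\frac{n-1}{5}$ and $-\frac{n-2}{15}$ and hence $j=\frac{n-1}{5}$, $D=\frac{2n-2}{5}e$, $\gamma=n$. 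This is heavier but sound. For the endgame the paper multiplies by $T$ and reduces $T^{3}$ mod $3$ to get $T^{(3)}\equiv T^{(6)}+2T^{(2)}+T$, forcing $T=T^{(3)}$, which directly contradicts $|T\cap T^{(3)}|=1$. Your endgame via $TT^{(2)}=(TT^{(2)})^{(2)}$ and level-set comparison is also valid and gives $T=T^{(2)}$; note, however, that you do not need the partial difference set argument here, since $T=T^{(2)}$ already contradicts the fact $\beta=0$ established at the start of this subsection.
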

\begin{proof}
 By Equation~(\ref{eq17}), we obtain
 \[TT^{(4)}\equiv T^{(5)}+2G\pmod{5}.\]
 This implies
 \begin{align*}
 &\sum_{i\ge0}|Y_{5i+3}|=2n+1,\\
 &\sum_{i\ge0}|Y_{5i+2}|=2n^2-2n,\\
 &|Y_i|=0\text{ for all }i\equiv0,1,4\pmod{5}.
 \end{align*}
 By Lemma~\ref{lemma8}, we have
 \begin{align}
 &\sum_{i\ge3}(i-2)|Y_i|=\sum_{i=1}^{N}i|Y_i|-2\sum_{i=1}^{N}|Y_i|=4n-1,\\
 &\sum_{i\ge1}5i(|Y_{5i+2}|+|Y_{5i+3}|)=\sum_{i=1}^{N}i|Y_i|-2\sum_{i\ge0}|Y_{5i+2}|-3\sum_{i\ge0}|Y_{5i+3}|=2n-2,\label{eq18}
 \end{align}
 and
 \begin{align*}
 2n^2+\gamma-4n+1=&\sum_{i=3}^{N}\frac{(i-1)(i-2)}{2}|Y_i|-\sum_{i\ge3}(i-2)|Y_i|\\
 =&\sum_{i=3}^{N}\frac{(i-3)(i-2)}{2}|Y_i|\\
 \le&\frac{N-2}{2}\sum_{i=3}^{N}(i-3)|Y_i|\\
 \le&\frac{N-2}{2}\sum_{i\ge1}5i(|Y_{5i+2}|+|Y_{5i+3}|)\\
 =&\frac{N-2}{2}(2n-2).
 \end{align*}
 Since $N\in\mathbb{Z}$, then $N\ge2n$. Equation~(\ref{eq18}) implies that $|Y_N|=1$. Since $Y_N=Y_{N}^{(-1)}$, then $Y_N=\{e\}$. Thus $N$ is odd, so $N=2n+1$. Hence $T=T^{(4)}$. By Equation (\ref{eq17}), we obtain
 \[T^{2}=T^{5}-(16n^3+40n^2+20n+2)G-(4n^2-6n+2)T-(4n-4)TT^{(2)}.\]
 Multiplying $T$ to both sides, we obtain
 \[T^{3}=T^{6}-(2n+1)(16n^3+40n^2+20n+2)G-(4n^2-6n+2)T^2-(4n-4)T^2T^{(2)}.\]
 Taking modulo 3, we have
 \begin{align*}
 T^{(3)}\equiv&(T^{(3)})^2+G+T^2+T^2T^{(2)}\pmod{3}\\
 \equiv& (2G+T^{(6)}+e)+G+(2G+T^{(2)}+e)+(2G+T^{(2)}+e)T^{(2)}\pmod{3}\\
 \equiv &2G+T^{(6)}+T^{(2)}+2e+2G+(T^{(2)})^2+T^{(2)}\pmod{3}\\
 \equiv &G+T^{(6)}+2T^{(2)}+2e+(2G+T^{(4)}+e)\pmod{3}\\
 \equiv &T^{(6)}+2T^{(2)}+T\pmod{3}.
 \end{align*}
Since $T\cap T^{(2)}=\{e\}$, then this is only possible for $T=T^{(3)}$ and $T^{(2)}=T^{(6)}$, which contradicts to $|T\cap T^{(3)}|=1$.
\end{proof}
\begin{proposition}\label{prop4}
  Theorem~\ref{mainthm} holds for $n\equiv3\pmod{15}$ except for $n=3$.
\end{proposition}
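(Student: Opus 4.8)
\section*{Proof proposal}

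The plan is to run the argument of Proposition~\ref{prop3} again, with the adjustments forced by $n\equiv3\pmod5$ (rather than $n\equiv1\pmod5$). First I would reduce Equation~(\ref{eq17}) modulo $5$: when $n\equiv3\pmod5$ one has $16n^3+40n^2+20n+2\equiv4$, $4n^2-6n+2\equiv0$ and $4n-4\equiv3\pmod5$, so
\begin{align*}
TT^{(4)}\equiv T^{(5)}+G+2TT^{(2)}\pmod5.
\end{align*}
Here, unlike in Proposition~\ref{prop3}, the cross term $TT^{(2)}$ survives; but for $n\equiv0\pmod3$ it is already known exactly, namely $TT^{(2)}=X_1+2T^{(3)*}+3T^{*}+4X_4$ with $G=X_1\sqcup X_4\sqcup T^{*}\sqcup T^{(3)*}$, $e\in X_1$, $|X_2|=|X_3|=2n$, $|X_1|=\tfrac43n^2-\tfrac{10}{3}n+1$ and $|X_4|=\tfrac23n^2-\tfrac23n$. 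Substituting this, the coefficient of $g\in G$ in $TT^{(4)}$ is fixed modulo $5$ by which of the four classes contains $g$ and by the single bit ``$g\in T^{(5)}$?''. Writing $TT^{(4)}=\sum_{i\ge0}iY_i$, one reads off that the multiplicity of an element of $T^{*}$, of $X_1\setminus\{e\}$, of $X_4$, of $T^{(3)*}$ is respectively $\equiv2$ or $3$, $\equiv3$ or $4$, $\equiv0$ or $4$, $\equiv0$ or $1$ modulo $5$; combined with the observation that $t^{3}=t^{-1}\cdot t^{4}$ makes every element of $T^{(3)*}$ occur at least once, this forces $Y_0\subseteq X_4$, $Y_1\subseteq T^{(3)*}$, $Y_2\subseteq T^{*}$. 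Comparing constant terms gives in addition $|T\cap T^{(4)}|\equiv4\pmod5$, which being odd shows $T\neq T^{(4)}$ and hence $|T\cap T^{(4)}|\le 2n-1$ (the set $T\setminus T^{(4)}$ is nonempty, closed under inversion, and avoids $e$, hence has size $\ge2$).

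Next I would feed these data into the counting identities of Lemma~\ref{lemma8} ($\sum_i i|Y_i|=4n^2+4n+1$, $\sum_i|Y_i|=2n^2+1$, and $\sum_{i\ge1}|Y_i|=1-\gamma+\sum_{i\ge3}\tfrac{(i-1)(i-2)}{2}|Y_i|$ with $2\gamma=|T^{*}\cap T^{(4)*}|$), combining them as in Proposition~\ref{prop3} to isolate $\sum_{i\ge3}\tfrac{(i-3)(i-2)}{2}|Y_i|$ and estimate it against $\tfrac{N-2}{2}\sum_{i\ge3}(i-3)|Y_i|$, the various pieces being controlled by the inclusions $Y_0\subseteq X_4$, $Y_1\subseteq T^{(3)*}$, $Y_2\subseteq T^{*}$ and the explicit cardinalities above. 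The aim is to force the maximal multiplicity $N$ to be large (ideally $N\ge 2n$) together with $|Y_N|=1$; then $Y_N=Y_N^{(-1)}=\{e\}$, so $N=|T\cap T^{(4)}|\ge 2n$, contradicting $|T\cap T^{(4)}|\le 2n-1$. (Equivalently, if the counting is pushed to $N=2n+1$, then $T=T^{(4)}$ and $2n+1=|T\cap T^{(4)}|\equiv4\pmod5$ contradicts $n\equiv3\pmod5$.) Should the $TT^{(4)}$ counting not be sharp enough on its own, one gets more leverage from $TT^{(5)}$: reducing $T^{6}=T^{2}T^{4}$ modulo $5$ gives $TT^{(5)}\equiv T^{(2)}T^{(4)}+2T^{(4)}+2T^{(2)}+2e\pmod5$, and the new cross term $T^{(2)}T^{(4)}=T^{(2)}(T^{(2)})^{(2)}$ is handled by applying the whole $n\equiv0\pmod3$ analysis to $T^{(2)}$ in place of $T$ (legitimate since $\gcd(3,2n^2+1)=1$), yielding $T^{(2)}T^{(4)}=X_1''+2T^{(6)*}+3T^{(2)*}+4X_4''$ and the further congruence $|T\cap T^{(5)}|\equiv2\pmod5$. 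The value $n=3$ is set aside because the counting inequalities require $n$ to be reasonably large; it is dealt with afterwards by the symmetric-polynomial method.

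The step I expect to be the main obstacle is precisely this counting step that pins down $N$. In Proposition~\ref{prop3} the reduced identity was clean ($TT^{(4)}\equiv T^{(5)}+2G$), so the multiplicities of $TT^{(4)}$ lay in only two residue classes modulo $5$ and all other $|Y_i|$ vanished outright, after which $N$ and $Y_N$ were immediate. Here the coefficients are spread over four partition classes with \emph{overlapping} admissible residues, so one cannot simply annihilate the multiplicities outside a single pair of residue classes; instead the argument must lean on the exact cardinalities $|X_1|,\dots,|X_4|$, on the inclusions $Y_0\subseteq X_4$, $Y_1\subseteq T^{(3)*}$, $Y_2\subseteq T^{*}$, and on careful bookkeeping of the intersection numbers $|T^{(j)}\cap T^{(k)}|$ — and it is this narrower margin that causes the argument to break down at the smallest value $n=3$. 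Everything after $N=2n+1$ (equivalently $T=T^{(4)}$) is then forced immediately by the congruence $|T\cap T^{(4)}|\equiv4\pmod5$.
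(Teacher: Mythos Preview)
Your setup is exactly right and matches the paper: the reduction of \eqref{eq17} modulo $5$ gives $TT^{(4)}\equiv T^{(5)}+G+2TT^{(2)}$, and substituting $TT^{(2)}=X_1+2T^{(3)*}+3T^{*}+4X_4$ yields
\[
TT^{(4)}\equiv 4e+T^{(5)*}+3X_1^{*}+2T^{*}+4X_4\pmod5,
\]
with the residue table you describe. However, the subsequent strategy --- mimicking Proposition~\ref{prop3} by trying to force the maximal multiplicity $N$ to be at least $2n$ and then pinning down $Y_N=\{e\}$ --- does not go through here, and the obstacle you flag is fatal rather than merely technical. In Proposition~\ref{prop3} the multiplicities lay in only two residue classes, so $|Y_0|=|Y_1|=0$ and $\sum_{i\ge3}(i-3)|Y_i|$ collapsed to $2n-2$; the inequality $2n^2+\gamma-4n+1\le\tfrac{N-2}{2}(2n-2)$ then forced $N\ge2n$. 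Here the two quadratic-size blocks $X_1^{*}$ and $X_4$ (of sizes $\tfrac43n^2-\tfrac{10}{3}n$ and $\tfrac23n^2-\tfrac23n$) have coefficients $\equiv3,4$ and $\equiv4,0$ respectively, so roughly $2n^2$ elements sit in $Y_3\cup Y_4$. Consequently $\sum_{i\ge3}(i-2)|Y_i|=4n-1+|Y_1|$ with $|Y_1|\le2n$, and the convexity bound only gives $N\ge1+\tfrac{4n^2+2\gamma}{4n-1+|Y_1|}\approx\tfrac{2n}{3}$, nowhere near $2n$. The fallback to $TT^{(5)}$ does not rescue this either.

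The paper's argument is both different and much shorter: rather than chasing $N$, it uses the first identity of Lemma~\ref{lemma8} alone. The residue data force every element of $X_1^{*}$ to have multiplicity $\ge3$, every element of $X_4$ multiplicity $\ge4$, every element of $T^{*}$ multiplicity $\ge2$, and every element of $T^{(5)*}$ multiplicity $\ge1$ (with the overlaps handled so that each $g\in T^{(5)*}$ contributes one extra unit beyond its class baseline). Summing,
\[
4n^2+4n+1=\sum_{i\ge1}i|Y_i|\;\ge\;2|T^{*}|+3|X_1^{*}|+4(|X_4|+1)+|T^{(5)*}|,
\]
which after substituting the known cardinalities gives $8n^2-32n+9\le0$, impossible for $n\ge4$. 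The point you were missing is that the very feature that breaks the $N$-argument --- large blocks sitting at multiplicity $3$ and $4$ --- is exactly what makes $\sum i|Y_i|$ too large directly.
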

\begin{proof}
   By Equation~(\ref{eq17}), we obtain
\begin{align*}
TT^{(4)}\equiv& T^{(5)}+G+2TT^{(2)}\pmod{5}\\
\equiv &T^{(5)}+G+2X_1+4T^{(3)*}+T^{*}+3X_4\pmod{5}\\
\equiv&4e+T^{(5)*}+3X_{1}^{*}+2T^{*}+4X_4\pmod{5}.
\end{align*}
This implies that
\begin{align*}
&\sum_{i\ge0}|Y_{5i}|=|T^{(3)*}\backslash T^{(5)*}|+|T^{(5)*}\cap X_4|,\\
&\sum_{i\ge0}|Y_{5i+1}|=|T^{(5)*}\cap T^{(3)*}|,\\
&\sum_{i\ge0}|Y_{5i+2}|=|T^{*}\backslash T^{(5)*}|,\\
&\sum_{i\ge0}|Y_{5i+3}|=|X_{1}^{*}\backslash T^{(5)*}|+|T^{(5)*}\cap T^{*}|,\\
&\sum_{i\ge0}|Y_{5i+4}|=|X_4\backslash T^{(5)*}|+1+|T^{(5)*}\cap X_{1}^{*}|.
\end{align*}
Note that for any $g\in T^{(5)*}$, $g$ appears at least once in $TT^{(4)}$. By Lemma~\ref{lemma8}, we have
\begin{align*}
4n^2+4n+1=&\sum_{i=1}^{N}i|Y_i|\\
\ge&\sum_{i\ge0}|Y_{5i+1}|+2\sum_{i\ge0}|Y_{5i+2}|+3\sum_{i\ge0}|Y_{5i+3}|+4\sum_{i\ge0}|Y_{5i+4}|+5\sum_{i\ge1}|Y_{5i}|\\
\ge&2|T^{*}|+3|X_{1}^{*}|+4(|X_4|+1)+|T^{(5)*}|\\
=&2\cdot2n+3(\frac{4}{3}n^2-\frac{10}{3}n)+4(\frac{2}{3}n^2-\frac{2}{3}n+1)+2n.
\end{align*}
This leads to $8n^2-32n+9\le0$, which is only possible for $n=3$.
\end{proof}

\begin{proposition}\label{prop5}
  Theorem~\ref{mainthm} holds for $n\equiv0\pmod{15}$.
\end{proposition}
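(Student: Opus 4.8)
The plan is to run the argument of Propositions~\ref{prop3} and~\ref{prop4} in the residue $n\equiv 0\pmod 5$ (so that $3\mid n$ and $5\mid n$ hold simultaneously): I would extract the multiplicity distribution of $TT^{(4)}$ modulo $5$ and then play it off against the counting identity of Lemma~\ref{lemma8} to force $n$ to be absurdly small.

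First I would record the facts already in hand from the subsection $n\equiv 0\pmod 3$: $e\in X_1$, $X_2=T^{(3)*}$, $X_3=T^{*}$, $|X_1|=\frac{4}{3}n^2-\frac{10}{3}n+1$, $|X_2|=|X_3|=2n$, $|X_4|=\frac{2}{3}n^2-\frac{2}{3}n$, $|X_i|=0$ for $i\ge 5$, and $TT^{(2)}=X_1+2T^{(3)*}+3T^{*}+4X_4$. Since $n\equiv 0\pmod 5$ we have $2n^2+1\equiv 1\pmod 5$, so $\gcd(5,2n^2+1)=1$, and exactly as was done for $T^{(3)}$ this gives $|\text{supp}(T^{(5)})|=2n+1$; hence every element of $T^{(5)}$ occurs with coefficient $1$ and in particular $T^{(5)}=e+T^{(5)*}$ with $|T^{(5)*}|=2n$. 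Next I would reduce Equation~(\ref{eq17}) modulo $5$: the coefficients $-(16n^3+40n^2+20n+2)$, $-(4n^2-6n+2)$, $-(4n-4)$ reduce to $3,3,4$, and $T^5\equiv T^{(5)}\pmod 5$, so
\[TT^{(4)}\equiv T^{(5)}+3G+3T+4\,TT^{(2)}\pmod 5 .\]
Substituting $TT^{(2)}=X_1+2T^{(3)*}+3T^{*}+4X_4$, $G=X_1+T^{(3)*}+T^{*}+X_4$, $T=e+T^{*}$, $X_1=e+X_1^{*}$ and collecting coefficients, I expect to obtain
\[TT^{(4)}\equiv e+T^{(5)*}+2X_1^{*}+T^{(3)*}+3T^{*}+4X_4 \pmod 5 .\]

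From here the rest is counting. The sets $X_1^{*},T^{(3)*},T^{*},X_4$ partition $G\setminus\{e\}$, and $T^{(5)*}$ is an additional subset of size $2n$ with distinct elements. Reading the congruence off: for $g\ne e$ the multiplicity of $g$ in $TT^{(4)}$ is $\equiv 2,1,3,4\pmod 5$ according as $g$ lies in $X_1^{*},T^{(3)*},T^{*},X_4$, increased by $1$ if moreover $g\in T^{(5)*}$; every such $g$ has positive multiplicity (if $g\in T^{(5)*}$ then $g=a_i^{5}=a_i\cdot a_i^{4}\in T\cdot T^{(4)}$), and $e$ has multiplicity $\equiv 1\pmod 5$, hence $\ge 1$. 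Thus in $TT^{(4)}=\sum_i iY_i$ each element's multiplicity is at least its residue-class value, and at least $5$ when that value is $0$, so Lemma~\ref{lemma8}(2) yields
\[4n^2+4n+1=\sum_{i\ge 1}i|Y_i|\ \ge\ 1+2|X_1^{*}|+|T^{(3)*}|+3|T^{*}|+4|X_4|+|T^{(5)*}|=\frac{16}{3}n^2+\frac{2}{3}n+1 .\]
Hence $12n^2+12n\ge 16n^2+2n$, i.e. $4n^2\le 10n$, forcing $n\le 2$. Since $n\equiv 0\pmod{15}$ and $n\ge 3$ this is impossible, so there is no lattice tiling of $\mathbb{Z}^n$ by $\mathcal{B}(n,2,1,1)$, which is the claim of Theorem~\ref{mainthm} in this residue.

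The main point to get right is the passage from the $TT^{(4)}$ congruence to the multiplicity inequality: one must check that $X_1^{*},T^{(3)*},T^{*},X_4$ and $T^{(5)*}$ are honest sets with exactly the stated cardinalities, pairwise disjoint as indicated (in particular $T\cap T^{(3)}=\{e\}$), each with no repeated support, so that the ``$+1$'' bonus carried by $T^{(5)*}$ is charged to each element at most once and the displayed lower bound is legitimate; all of this is already available from the earlier sections. Otherwise I foresee no real difficulty: the residue $n\equiv 0\pmod 5$ is the most benign sub-case of $n\equiv 0\pmod 3$, since modulo $5$ the coefficients of $G$, $T$ and $TT^{(2)}$ are all nonzero and no term linear in $n$ survives, so the single relation $TT^{(4)}\bmod 5$ already suffices and one need not invoke the auxiliary $TT^{(5)}\bmod 5$ (presumably reserved for the residues $n\equiv 9,12\pmod{15}$).
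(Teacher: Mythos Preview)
Your proposal is correct and follows essentially the same route as the paper: reduce Equation~(\ref{eq17}) modulo $5$ for $n\equiv 0\pmod 5$ to obtain $TT^{(4)}\equiv e+T^{(5)*}+2X_1^{*}+T^{(3)*}+3T^{*}+4X_4\pmod 5$, then use the residue classes together with the observation that every $g\in T^{(5)*}$ has positive multiplicity (so the residue-$0$ elements contribute at least $5$) to bound $\sum_i i|Y_i|$ from below, yielding $4n^2-10n\le 0$. Your regrouping of the bound as ``base value $+$ $|T^{(5)*}|$ bonus'' is exactly the identity the paper uses implicitly, and your remark that $\gcd(5,2n^2+1)=1$ forces $T^{(5)}$ to have simple support is the missing justification the paper takes for granted.
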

\begin{proof}
   By Equation~(\ref{eq17}), we obtain
\begin{align*}
TT^{(4)}\equiv& T^{(5)}+3G+3T+4TT^{(2)}\pmod{5}\\
\equiv &T^{(5)}+3G+3T++4X_1+3T^{(3)*}+2T^{*}+X_4\pmod{5}\\
\equiv&e+T^{(5)*}+2X_{1}^{*}+T^{(3)*}+3T^{*}+4X_4\pmod{5}.
\end{align*}
This implies that
\begin{align*}
&\sum_{i\ge0}|Y_{5i}|=|T^{(5)*}\cap X_4|,\\
&\sum_{i\ge0}|Y_{5i+1}|=|T^{(3)*}\backslash T^{(5)*}|+1,\\
&\sum_{i\ge0}|Y_{5i+2}|=|T^{(5)*}\cap T^{(3)*}|+|X_{1}^{*}\backslash T^{(5)*}|,\\
&\sum_{i\ge0}|Y_{5i+3}|=|T^{*}\backslash T^{(5)*}|+|T^{(5)*}\cap X_{1}^{*}|,\\
&\sum_{i\ge0}|Y_{5i+4}|=|T^{(5)*}\cap T^{*}|+|X_4\backslash T^{(5)*}|.
\end{align*}
Note that for any $g\in T^{(5)*}$, $g$ appears at least once in $TT^{(4)}$. By Lemma~\ref{lemma8}, we have
\begin{align*}
4n^2+4n+1=&\sum_{i=1}^{N}i|Y_i|\\
\ge&\sum_{i\ge0}|Y_{5i+1}|+2\sum_{i\ge0}|Y_{5i+2}|+3\sum_{i\ge0}|Y_{5i+3}|+4\sum_{i\ge0}|Y_{5i+4}|+5\sum_{i\ge1}|Y_{5i}|\\
\ge&|T^{(3)*}|+1+2|X_{1}^{*}|+3|T^{*}|+4|X_4|+|T^{(5)*}|\\
=&2n+1+2(\frac{4}{3}n^2-\frac{10}{3}n)+3\cdot 2n+4(\frac{2}{3}n^2-\frac{2}{3}n)+2n.
\end{align*}
This leads to $4n^2-10n\le0$, which is impossible.
\end{proof}
In order to solve the cases $n\equiv12\pmod{15}$ and $n\equiv9\pmod{15}$, we need to study $TT^{(5)}$.
By Equation~(\ref{eq17}), we have
\begin{equation}\label{eq19}
\begin{split}
T^6=&(2n+1)(16n^3+40n^2+20n+2)G+T^2T^{(4)}+(4n^2-6n+2)T^2+(4n-4)T^2T^{(2)}\\
=&(32n^4+96n^3+80n^2+24n+2)G+(2G+T^{(2)}+(2n-2)e)T^{(4)}+(4n^2-6n+2)(2G+T^{(2)}\\
&+(2n-2)e)+(4n-4)(2G+T^{(2)}+(2n-2)e)T^{(2)}\\
=&(32n^4+96n^3+104n^2+8n)G+T^{(2)}T^{(4)}+(2n-2)T^{(4)}+(12n^2-22n+10)T^{(2)}+(8n^3-20n^2+16n-4)e\\
&+(4n-4)(2G+T^{(4)}+(2n-2)e)\\
=&(32n^4+96n^3+104n^2+16n-8)G+(6n-6)T^{(4)}+(12n^2-22n+10)T^{(2)}+T^{(2)}T^{(4)}+(8n^3-12n^2+4)e.
\end{split}
\end{equation}

\begin{proposition}\label{prop6}
  Theorem~\ref{mainthm} holds for $n\equiv12\pmod{15}$.
\end{proposition}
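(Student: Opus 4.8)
The plan is to imitate the structure of Propositions~\ref{prop3}--\ref{prop5}, but now working modulo $5$ with the finer identity~(\ref{eq19}) for $T^6$, since the coarse estimate from $TT^{(4)}\pmod 5$ alone is not strong enough when $n\equiv12\pmod{15}$. First I would reduce~(\ref{eq19}) modulo $5$: using $2n+1\equiv0$, $6n-6\equiv 4n-1\pmod 5$ etc., and the fact that (since $\gcd(5,2n^2+1)=1$) $T^{(5)}$ again satisfies the conditions of Theorem~\ref{groupring}, so $T^6=(T^{(5)})^2T^{(-3)}\cdot(\dots)$ can be rewritten; more directly, multiply~(\ref{eq19})'s predecessor by $T$ to get a clean expression for $TT^{(5)}$ as
\[
TT^{(5)}=T^6-(\text{known multiple of }G)-(\text{known coeff.})T^{(2)}-(6n-6)T^{(4)}-T^{(2)}T^{(4)},
\]
and then reduce everything modulo $5$, replacing $T^{(2)}T^{(4)}$ via $(T^{(2)})^2=2G+T^{(4)}+(2n-2)e$ applied to $T^{(2)}$ (i.e. $T^{(2)}T^{(4)}=(T^{(2)})^{(2)}\cdot\ldots$) and $TT^{(2)}\pmod5$ by the expansion $X_1+2T^{(3)*}+3T^{*}+4X_4$ already obtained in the $n\equiv0\pmod3$ subsection. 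The upshot should be a congruence of the shape $TT^{(5)}\equiv c_0e+c_1G+(\text{combination of }T^{(5)*},X_1^*,T^{(3)*},T^{*},X_4)\pmod5$ with explicit residues $c_i$ depending only on $n\bmod5$, here $n\equiv2\pmod5$.

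Next I would set up the partition $TT^{(5)}=\sum_{i=0}^{N'}iW_i$ with $H=\sum_i W_i$, and record the analogues of Lemma~\ref{lemma8}: $\sum|W_i|=2n^2+1$, $\sum i|W_i|=4n^2+4n+1$, and the inclusion--exclusion identity $\sum_{i\ge1}|W_i|=1-\gamma'+\sum_{i\ge3}\frac{(i-1)(i-2)}{2}|W_i|$ with $2\gamma'=|T^*\cap T^{(5)*}|$ (the proof being ``similar to Lemma~\ref{lemma2}'' as elsewhere). From the mod-$5$ congruence I read off, for each residue class $r\pmod5$, the sum $\sum_{i\equiv r}|W_i|$ as an explicit combination of $|T^{(5)*}|=2n$, $|X_1^*|=\frac43n^2-\frac{10}{3}n$, $|X_4|=\frac23n^2-\frac23n$, $|T^{(3)*}|=|T^*|=2n$, together with the sizes of various pairwise intersections among $T^{(5)*},T^{(3)*},T^*,X_1^*,X_4$. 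Then, exactly as in Propositions~\ref{prop3}--\ref{prop5}, I weight these by the minimal representatives $0,1,2,3,4,5,\dots$ of each residue class and plug into $\sum i|W_i|=4n^2+4n+1$ to obtain a lower bound; the bound should read $4n^2+4n+1\ge 4n^2+(\text{linear in }n)+(\text{const})$ with the linear coefficient strictly exceeding $4$, forcing a contradiction for all $n\equiv12\pmod{15}$ (and in particular for all such $n\ge3$, since the smallest is $n=12$). Should the crude weighting not suffice, I would instead push the argument to the exact-equality regime (as in Case~2 of Proposition~\ref{prop1} and in Proposition~\ref{prop2}): deduce $N'=2n+1$, hence $T=T^{(5)}$, and derive a contradiction with $|T\cap T^{(3)}|=1$ or with Lemma~\ref{lemma6} by comparing~(\ref{eq19}) to Condition~(4) of Theorem~\ref{groupring} modulo a small prime.

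The main obstacle I anticipate is bookkeeping: the term $T^{(2)}T^{(4)}$ in~(\ref{eq19}) is itself a ``second-order'' product, and to control its coefficients modulo $5$ I need to know the analogue of the $TT^{(2)}$-expansion for the pair $(T^{(2)},T^{(4)})$. Because $T^{(2)}$ satisfies the same Theorem~\ref{groupring} hypotheses as $T$, the decomposition $T^{(2)}T^{(4)}=Y_1+2T^{(6)*}+3T^{(2)*}+4Y_4$ (with $|Y_1|,|Y_4|$ the same quadratics in $n$) holds by applying the $n\equiv0\pmod3$ analysis to $T^{(2)}$ in place of $T$; substituting this makes the mod-$5$ reduction purely mechanical. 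The only genuine risk is that, after all substitutions, the resulting linear-in-$n$ slack is only $\ge4n^2+4n+(\text{something})$ with slack coefficient exactly $4$, i.e. the inequality degenerates; in that borderline case I would fall back on the exact-equality analysis of the previous paragraph, which is robust because it ultimately forces $T=T^{(5)}$ and then a clean parity/intersection contradiction, mirroring the endgame of Propositions~\ref{prop2} and~\ref{prop3}.
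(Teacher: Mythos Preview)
Your setup is right: for $n\equiv12\pmod{15}$ one does study $TT^{(5)}\pmod5$ via~(\ref{eq19}), and the decomposition $T^{(2)}T^{(4)}=X_{1}^{(2)}+2T^{(6)*}+3T^{(2)*}+4X_{4}^{(2)}$ (the $n\equiv0\pmod3$ analysis applied to $T^{(2)}$) is exactly what is needed. The reduction gives
\[
TT^{(5)}\equiv e+T^{(4)*}+X_{1}^{(2)*}+2T^{(6)*}+2T^{(2)*}+4X_{4}^{(2)}\pmod5,
\]
and with $a_1=|T^{(4)*}\cap X_1^{(2)*}|$, $a_2=|T^{(4)*}\cap T^{(6)*}|$, $a_3=|T^{(4)*}\cap X_4^{(2)}|$, $a_1+a_2+a_3=2n$, the residue-class sizes are as you describe.

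However, both of your proposed endgames fail. The weighted lower bound is
\[
\sum_{i}i|W_i|\ \ge\ 1\cdot\bigl(\tfrac{4}{3}n^2-\tfrac{10}{3}n+1-a_1\bigr)+2(4n-a_2+a_1)+3a_2+4\bigl(\tfrac{2}{3}n^2-\tfrac{2}{3}n-a_3\bigr)+5a_3
=4n^2+4n+1,
\]
i.e.\ exact equality, so no contradiction arises from the crude estimate. Worse, equality forces $|W_i|=0$ for all $i\ge6$, so the maximum coefficient is at most $5$, not $2n+1$; your fallback ``$N'=2n+1$, hence $T=T^{(5)}$'' simply does not apply here (that mechanism works in Propositions~\ref{prop2} and~\ref{prop3} because the residue-class bookkeeping there leaves room only near $i=2n+1$, which is the opposite of what happens now).

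The missing idea is a second count. Feeding the exact $|W_i|$'s into the inclusion--exclusion identity $\sum_{i\ge1}|W_i|=1-\gamma'+\sum_{i\ge3}\tfrac{(i-1)(i-2)}{2}|W_i|$ (and observing along the way that $|T\cap T^{(5)}|=1$, so $\gamma'=0$) yields the extra relation $a_2+3a_3=2n$. Together with $a_1+a_2+a_3=2n$ this gives $a_1+2a_2+4a_3=4n$; but $a_1+2a_2+4a_3$ is precisely the total multiplicity with which the elements of $T^{(4)*}$ occur in $T^{(2)}T^{(4)}$. On the other hand, Lemma~\ref{lemma1} applied to $T^{(4)}$ gives $|T^{(4)*}\cap t^2T^{(4)*}|\ge2$ for each $t^2\in T^{(2)*}$, so that same total is at least $2n+2\cdot2n=6n$. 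This is the contradiction the paper uses, and it is not reachable from either of your two proposed routes.
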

\begin{proof}
  By Equation~(\ref{eq19}), we have
  \begin{equation}\label{eq20}
  \begin{split}
  TT^{(5)}\equiv& T^{(4)}+4T^{(2)}+T^{(2)}T^{(4)}\pmod{5}\\
  \equiv& e+T^{(4)*}+X_{1}^{(2)*}+2T^{(6)*}+2T^{(2)*}+4X_{4}^{(2)}\pmod{5}.
  \end{split}
  \end{equation}
  Writing $TT^{(5)}$ as
  \[TT^{(5)}=\sum_{i=0}^{L}iZ_i,\]
  where $Z_i$, $(i=0,1,\dots,L)$ form a partition of group $G$. Let
  \[|T^{(4)*}\cap X_{1}^{(2)*}|=a_1,\ |T^{(4)*}\cap T^{(6)*}|=a_2,\ |T^{(4)*}\cap X_{4}^{(2)}|=a_3.\]
  Then $a_1+a_2+a_3=2n.$ From Equation~(\ref{eq20}), we get
  \begin{align*}
  &\sum_{i\ge0}|Z_{5i}|=a_3,\\
  &\sum_{i\ge0}|Z_{5i+1}|=\frac{4}{3}n^2-\frac{10}{3}n+1-a_1,\\
  &\sum_{i\ge0}|Z_{5i+2}|=4n-a_2+a_1,\\
  &\sum_{i\ge0}|Z_{5i+3}|=a_2,\\
  &\sum_{i\ge0}|Z_{5i+4}|=\frac{2}{3}n^2-\frac{2}{3}n-a_3.
  \end{align*}
  Note that for any $t\in T$, $t^4=t^{-1}\cdot t^5$, then $|Z_0|=0$.
We can compute to get that
  \begin{align*}
  4n^2+4n+1=&\sum_{i=1}^{L}i|Z_i|\\
  \ge&\sum_{i\ge0}|Z_{5i+1}|+2\sum_{i\ge0}|Z_{5i+2}|+3\sum_{i\ge0}|Z_{5i+3}|+4\sum_{i\ge0}|Z_{5i+4}|+5\sum_{i\ge1}|Z_{5i}|\\
  =&\frac{4}{3}n^2-\frac{10}{3}n+1-a_1+2(4n-a_2+a_1)+3a_2+4(\frac{2}{3}n^2-\frac{2}{3}n-a_3)+5a_3\\
  =&4n^2+2n+1+a_1+a_2+a_3.
  \end{align*}
  This leads to $|Z_1|=\frac{4}{3}n^2-\frac{10}{3}n+1-a_1$, $|Z_2|=4n-a_2+a_1$, $|Z_3|=a_2$, $|Z_4|=\frac{2}{3}n^2-\frac{2}{3}n-a_3$, $|Z_5|=a_3$ and $|Z_i|=0$ for all $i\ge6$. We can also get $|T\cap T^{(5)}|=1$. With a similar discussion as Lemma~\ref{lemma2}, we have
  \begin{align*}
  2n^2+1=&\sum_{i=1}^{L}|Z_i|\\
  =&1+\sum_{i=3}^{L}\frac{(i-1)(i-2)}{2}|Z_i|\\
  =&1+a_2+3(\frac{2}{3}n^2-\frac{2}{3}n-a_3)+6a_3.
  \end{align*}
  This leads to $a_2+3a_3=2n$. Recall that $T^{(2)}T^{(4)}=X_{1}^{(2)}+2T^{(6)*}+3T^{(2)*}+4X_{4}^{(2)}$. On one hand, the elements in  $T^{(4)*}$ appear $a_1+2a_2+4a_3=(a_1+a_2+a_3)+a_2+3a_3=4n$ times in $T^{(2)}T^{(4)}$. On the other hand, since $|T^{(4)*}\cap t^2T^{(4)*}|=|T^{(4)}\cap t^2T^{(4)}|\ge2$, then the elements in  $T^{(4)*}$ appear at least $2n+2\cdot2n=6n$ times in $T^{(2)}T^{(4)}$, which is a contradiction.
\end{proof}
\begin{proposition}\label{prop7}
  Theorem~\ref{mainthm} holds for $n\equiv9\pmod{15}$.
\end{proposition}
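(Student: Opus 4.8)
\textit{Proof proposal.} The plan is to follow the pattern of Proposition~\ref{prop6}, this time starting from Equation~(\ref{eq19}). Since $n\equiv9\pmod{15}$ we have $n\equiv0\pmod3$, so everything established in the current subsection is available: $|T\cap T^{(2)}|=1$, $|T\cap T^{(3)}|=1$, the explicit values of $|X_1|,\dots,|X_4|$, the identity $T^{(2)}T=X_1+2T^{(3)*}+3T^{*}+4X_4$ with the four blocks partitioning $G$, and --- applying the automorphism $x\mapsto x^2$ of $G$ --- the identity $T^{(2)}T^{(4)}=X_1^{(2)}+2T^{(6)*}+3T^{(2)*}+4X_4^{(2)}$ together with $|T^{(2)}\cap T^{(4)}|=1$.

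First I would reduce Equation~(\ref{eq19}) modulo $5$. Using $T^{5}\equiv T^{(5)}\pmod5$, hence $T^{6}\equiv TT^{(5)}\pmod5$, and $n\equiv4\pmod5$, one gets
\[
TT^{(5)}\equiv G+3T^{(4)}+4T^{(2)}+T^{(2)}T^{(4)}+4e\pmod5 ,
\]
and substituting the decomposition of $T^{(2)}T^{(4)}$ produces an explicit description of the coefficients of $TT^{(5)}$ modulo $5$ (in particular no coefficient is $\equiv4\pmod5$). Writing $TT^{(5)}=\sum_i iZ_i$ with the $Z_i$ partitioning $G$, I would then assemble the usual constraints: $\sum_i|Z_i|=2n^2+1$, $\sum_i i|Z_i|=4n^2+4n+1$, the support identity $\sum_{i\ge1}|Z_i|=1-\gamma+\sum_{i\ge3}\binom{i-1}{2}|Z_i|$ with $2\gamma=|T^{*}\cap T^{(5)*}|$ (proved as in Lemma~\ref{lemma2}, using $\gcd(5,2n^2+1)=1$), and the fact that every element of $T^{(4)*}\cup T^{(6)*}\cup\{e\}$ occurs at least once in $TT^{(5)}$ (because $a_i^{4}=a_i^{-1}a_i^{5}$, $a_i^{6}=a_ia_i^{5}$ and $e=e\cdot e$). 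The goal is to pin down all $|Z_i|$, obtain the analogue of the relation $a_2+3a_3=2n$ of Proposition~\ref{prop6} with $a_1=|T^{(4)*}\cap X_1^{(2)*}|$, $a_2=|T^{(4)*}\cap T^{(6)*}|$, $a_3=|T^{(4)*}\cap X_4^{(2)*}|$ (and $a_1+a_2+a_3=2n$ since $T^{(4)*}\cap T^{(2)*}=\emptyset$), and then finish exactly as in Proposition~\ref{prop6}: the elements of $T^{(4)*}$ would appear $a_1+2a_2+4a_3=2n+(a_2+3a_3)=4n$ times in $T^{(2)}T^{(4)}$, whereas $|T^{(4)}\cap t^{2}T^{(4)}|=|T^{(2)}\cap tT^{(2)}|\ge2$ for each $t\in T^{*}$ (Lemma~\ref{lemma1} applied to $T^{(2)}$) forces at least $6n$, a contradiction.

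The step I expect to be the main obstacle is the control of residue class $0$. For $n\equiv4\pmod5$ the coefficient of $G$ in Equation~(\ref{eq19}) is $\equiv1\pmod5$, so the all-ones summand $G$ appears in the reduction above --- in contrast with the case $n\equiv2\pmod5$ of Proposition~\ref{prop6}, where it is absent. Adding $G$ shifts the $\Theta(n^{2})$-size block $X_4^{(2)*}\setminus T^{(4)*}$ into residue class $0$; since these elements lie in neither $T^{(4)*}$ nor $T^{(6)*}$, the cheap bound ``occurs at least once'' does not apply to them, so a priori $Z_0$ may be as large as $|X_4^{(2)*}|-a_3$, and the weight identity $\sum_i i|Z_i|=4n^2+4n+1$ alone no longer fixes all $|Z_i|$. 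To overcome this one needs extra input; the natural candidates are to combine the present analysis with the mod-$5$ reduction of $TT^{(4)}$ (for $n\equiv4\pmod5$ this keeps the large blocks $X_1^{*},X_4$ in nonzero residue classes and so yields complementary linear relations, now between $T^{(5)*}$ and $X_1^{*},T^{(3)*},T^{*},X_4$), the two systems together forcing $a_2+3a_3=2n$, or to feed the partial conclusions back into a further power identity reduced modulo $3$, in the spirit of the end of Proposition~\ref{prop3}. In every case the contradiction is the linear-versus-quadratic mismatch between the total weight $4n^2+4n+1$ and the contribution of $T^{(4)*}$ in $T^{(2)}T^{(4)}$.
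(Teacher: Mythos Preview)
Your reduction of Equation~(\ref{eq19}) modulo $5$ is correct, and you have put your finger precisely on the difficulty: for $n\equiv4\pmod5$ the block $X_4^{(2)}\setminus T^{(4)*}$, of size $\tfrac23n^2-\tfrac23n-a_3$, lands in residue class $0$, so one cannot pin down all $|Z_i|$ from the weight identity alone as in Proposition~\ref{prop6}. Where your proposal falls short is the endgame: you aim for the relation $a_2+3a_3=2n$ and the $4n$-versus-$6n$ counting contradiction of Proposition~\ref{prop6}, and you leave the derivation of that relation to an unspecified combination with $TT^{(4)}\pmod5$ or a further mod-$3$ identity. The paper does neither of these, and in fact never proves $a_2+3a_3=2n$ here.

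The paper's route sidesteps the large residue-$0$ class with three concrete ingredients you are missing. First, a direct pigeonhole bound $a_3\le\tfrac43n$: if more than $\tfrac43n$ elements of $T^{(4)*}$ lie in $X_4^{(2)}$, then some $t\in T$ would admit three relations $t_i^{4}=t^{2}t_j^{4}$, and Lemma~\ref{lemma1} (applied to $T^{(4)}$) forces a collapse. Second, $|Z_1|=0$: the residue-$1$ class is exactly $T^{(4)*}\cap T^{(6)*}$, and each such $g$ has the two distinct representations $g=t_1^{-1}\cdot t_1^{5}$ and $g=t_2\cdot t_2^{5}$ in $TT^{(5)}$, so its coefficient is $\ge2$ and $\equiv1\pmod5$, hence $\ge6$. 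Third, and this is the decisive trick, the paper forms the particular combination
\[
\sum_i i|Z_i|\;+\;\sum_i|Z_i|\;-\;\sum_{i\ge3}\tfrac{(i-1)(i-2)}{2}|Z_i|\;+\;\sum_{i\ge6}\tfrac{i(i-5)}{2}|Z_i|
\;=\;2|Z_1|+3|Z_2|+3|Z_3|+2|Z_4|,
\]
which annihilates every $|Z_i|$ with $i\ge5$ (in particular the uncontrolled $|Z_5|,|Z_{10}|,\dots$). Using $|Z_1|=|Z_4|=0$, the support identity, and $a_3\le\tfrac43n$, the left side is squeezed between $4n^2+4n+2-\tfrac12|T^{*}\cap T^{(5)*}|$ and $4n^2+4n+3$; parity then forces $|T\cap T^{(5)}|=3$ and $|Z_i|=0$ for $i\ge6$, whence $4n^2+4n+1=3(|Z_2|+|Z_3|)$. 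The final contradiction is $3\mid(2n+1)^{2}$, not the $4n$-versus-$6n$ count you were aiming at.
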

\begin{proof}
    By Equation~(\ref{eq19}), we have
  \begin{equation}\label{eq21}
  \begin{split}
  TT^{(5)}\equiv& G+3T^{(4)}+4T^{(2)}+T^{(2)}T^{(4)}+4e\pmod{5}\\
  \equiv& 3e+3T^{(4)*}+2X_{1}^{(2)*}+3T^{(6)*}+3T^{(2)*}\pmod{5}.
  \end{split}
  \end{equation}
  Writing $TT^{(5)}$ as
  \[TT^{(5)}=\sum_{i=0}^{L}iZ_i,\]
  where $Z_i$, $(i=0,1,\dots,L)$ form a partition of group $G$. Let
  \[|T^{(4)*}\cap X_{1}^{(2)*}|=a_1,\ |T^{(4)*}\cap T^{(6)*}|=a_2,\ |T^{(4)*}\cap X_{4}^{(2)}|=a_3.\]
  Then $a_1+a_2+a_3=2n.$ From Equation~(\ref{eq21}), we get
  \begin{align*}
  &\sum_{i\ge0}|Z_{5i}|=\frac{2}{3}n^2-\frac{2}{3}n-a_3+a_1,\\
  &\sum_{i\ge0}|Z_{5i+1}|=a_2,\\
  &\sum_{i\ge0}|Z_{5i+2}|=\frac{4}{3}n^2-\frac{10}{3}n-a_1,\\
  &\sum_{i\ge0}|Z_{5i+3}|=4n+1-a_2+a_3,\\
  &|Z_i|=0\text{ for }i\equiv4\pmod{5}.
  \end{align*}

  {\bf{Claim: $a_3\le\frac{4}{3}n.$}}

  Recall that $T^{(2)}T^{(4)}=X_{1}^{(2)}+2T^{(6)*}+3T^{(2)*}+4X_{4}^{(2)}$. If $|T^{(4)*}\cap X_{4}^{(2)}|=a_3>\frac{4}{3}n$, then there exist $t_1,t_2,t_3\in T$ such that $t_{1}^{4}=t^2t_{4}^{4}$, $t_{2}^{4}=t^2t_{5}^{4}$ and $t_{3}^{4}=t^2t_{6}^{4}$ for some $t,t_4,t_5,t_6\in T$. This implies that $t^2=t_{1}^{4}t_{4}^{-4}=t_{2}^{4}t_{5}^{-4}=t_{3}^{4}t_{6}^{-4}$, and so $t_1=t_{5}^{-1}=t_{6}^{-1}$, $t_2=t_{4}^{-1}=t_{6}^{-1}$, $t_3=t_{4}^{-1}=t_{5}^{-1}$, which is a contradiction. This completes the proof of claim.

  Note that for any $g\in T^{(4)*}\cap T^{(6)*}$, $g$ appears at least $6$ times in $TT^{(5)}$. Hence $|Z_1|=0$.
 With a similar discussion as Lemma~\ref{lemma2}, we have
 \begin{align*}
 &4n^2+4n+1=\sum_{i=1}^{L}i|Z_i|,\\
 &\sum_{i=1}^{L}|Z_i|=1-\frac{|T^{*}\cap T^{(5)*}|}{2}+\sum_{i=3}^{L}\frac{(i-1)(i-2)}{2}|Z_i|.
 \end{align*}
 Then we can get
   \begin{equation}\label{eq22}
   \begin{split}
   &4n^2+4n+2-\frac{|T^{*}\cap T^{(5)*}|}{2}+\sum_{i=6}^{L}\frac{i(i-5)}{2}|Z_i|\\
   =&\sum_{i=1}^{L}i|Z_i|+\sum_{i=1}^{L}|Z_i|-\sum_{i=3}^{L}\frac{(i-1)(i-2)}{2}|Z_i|+\sum_{i=6}^{L}\frac{i(i-5)}{2}|Z_i|\\
   =&2|Z_1|+3|Z_2|+3|Z_3|+2|Z_4|\\
   =&3|Z_2|+3|Z_3|\\
   \le&3(\frac{4}{3}n^2-\frac{10}{3}n-a_1+4n+1-a_2+a_3)\\
   =&4n^2+2n+3+3(a_3-a_1-a_2)\\
   \le&4n^2+4n+3.
   \end{split}
   \end{equation}
   By Equation (\ref{eq21}), we see that $|T\cap T^{(5)}|=3$ or $|T\cap T^{(5)}|\ge13$. If $|T\cap T^{(5)}|\ge13$, then $4n^2+4n+2-\frac{|T^{*}\cap T^{(5)*}|}{2}+\sum_{i=6}^{L}\frac{i(i-5)}{2}|Z_i|\ge4n^2+4n+2-6+\frac{13(13-5)}{2}=4n^2+4n+48$, which is a contradiction. Hence $|T\cap T^{(5)}|=3$. Then $|Z_i|=0$ for all $i\ge6$. From Equation~(\ref{eq22}), we obtain $4n^2+4n+1=3|Z_2|+3|Z_3|$. This implies $3\mid(4n^2+4n+1)$, which is a contradiction.
\end{proof}
\subsection{Some sporadic cases}
For $n=11$, Wei and Schwartz \cite{WS22} have constructed a lattice tiling of $\mathbb{Z}^n$ by $\mathcal{B}(11,2,1,1)$.
For $n=3,4,5$, the nonexistence of lattice tiling with $\mathcal{B}(n,2,1,1)$ follows from the following proposition.
\begin{proposition}\label{prop8}
  Suppose that $2n^2+1=mp$ where $p$ is a prime and $p>2n+1$. Define $a=\min\{k\in\mathbb{Z}_{\ge0}:\ p\mid(4^k-4n-2)\}$ and $b$ is the order of $4$ modulo $p$ (If there is no $k$ with $p\mid(4^k-4n-2)$, then let $a=\infty$). Assume that there is a lattice tiling of $\mathbb{Z}^n$ by $\mathcal{B}(n,2,1,1)$, then there exists at least one $\ell\in\{0,1,\dots,\lfloor\sqrt{\frac{m-1}{2}}\rfloor\}$ such that the equation
  \[a(x+1)+by=n-\ell\]
has nonnegative integer solutions.
\end{proposition}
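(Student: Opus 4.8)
The plan is to project everything to $\mathbb{Z}_p$ and read off congruences on the projected generators, then solve them with Newton's identities.

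\emph{Reduction modulo $p$.} Since $p>2n+1$ forces $m=(2n^2+1)/p<n<p$, the prime $p$ divides $2n^2+1$ exactly, so $G\cong\mathbb{Z}_p\times H$ with $|H|=m$ and $\gcd(2m,p)=1$. Let $\pi\colon G\to\mathbb{Z}_p$ be the projection, identify $\mathbb{Z}_p$ with $\mathbb{Z}/p$, and set $m_i=\pi(a_i)\in\mathbb{Z}/p$, $n_0=|\{i:m_i=0\}|$, and $S=\pi(T)=\sum_j d_j g^{j}\in\mathbb{Z}[\mathbb{Z}_p]$; condition (2) together with the $n_0$ indices having $a_i\in H$ gives $d_0=2n_0+1$, and (3) gives $d_{-j}=d_j$. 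Applying the ring homomorphism $\mathbb{Z}[G]\to\mathbb{Z}[\mathbb{Z}_p]$ induced by $\pi$ to condition (4) yields $S^2=2m\,\underline{\mathbb{Z}_p}+S^{(2)}+(2n-2)e$; comparing the coefficient of the identity gives $\sum_j d_j^{2}=2m+d_0+2n-2$. Writing $n_1,n_2,\dots$ for the multiplicities with which the $n-n_0$ nonzero residue pairs $\{\pm m_i\}$ occur, this rearranges to $\sum_s n_s(n_s-1)=m-1-2n_0^{2}\ge0$, hence $n_0\le\lfloor\sqrt{(m-1)/2}\rfloor$. I will prove the proposition with $\ell=n_0$.

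\emph{Power-sum congruences.} Applying the same homomorphism to condition (1) gives an identity of multisets of $\mathbb{Z}/p$:
\[
 m\cdot(\mathbb{Z}/p)\;=\;\{0\}\ \uplus\ \{\pm m_i:1\le i\le n\}\ \uplus\ \{m_i+m_j,\ m_i-m_j,\ -m_i+m_j,\ -m_i-m_j:1\le i<j\le n\}.
\]
For each $k\ge1$ the assignment $M\mapsto\sum_{x\in M}x^{2k}\bmod p$ is a well-defined additive functional on multisets of $\mathbb{Z}/p$. Applying it to both sides, using $\sum_{x\in\mathbb{F}_p}x^{2k}\equiv0\pmod p$ for $0<2k<p-1$ and collapsing the binomial expansions of $(m_i\pm m_j)^{2k}$, one obtains
\[
 p_{2k}\bigl(4n+2-4^{k}\bigr)\equiv\Phi_k\bigl(p_2,p_4,\dots,p_{2k-2}\bigr)\pmod p\qquad(0<2k<p-1),
\]
where $p_{2k}=\sum_{i=1}^n m_i^{2k}$ and $\Phi_k$ is an explicit form, homogeneous of degree $2$ in the lower power sums, so it vanishes when they do. Note $a\ge1$, since $a=0$ would give $p\mid4n+1$, impossible as $\gcd(4n+1,2n^2+1)\mid9$ while $p>2n+1$. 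Feeding the congruences forward, $\Phi_1=0$ forces $p_2\equiv0$ unless $4^{1}\equiv4n+2$, then $\Phi_2=0$ forces $p_4\equiv0$ unless $4^{2}\equiv4n+2$, and so on, giving $p_2\equiv p_4\equiv\cdots\equiv p_{2(a-1)}\equiv0\pmod p$. Since $\operatorname{char}\mathbb{F}_p=p>n$, Newton's identities convert this into $e_1\equiv\cdots\equiv e_{a-1}\equiv0\pmod p$ for the elementary symmetric functions of the multiset $\{m_i^{2}\}$; in particular $a$ is finite and $a-1<n-n_0$, since otherwise $\prod_{m_i\ne0}(x-m_i^{2})=x^{\,n-n_0}$ over $\mathbb{F}_p$, forcing every $m_i\equiv0$ and hence $T\subseteq H$. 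Thus $a\le n-n_0=n-\ell$, which is already the instance $x=y=0$ of the claimed equation.

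\emph{The exact equation, and the main obstacle.} To pin down $n-\ell=a(x+1)+by$ I would push the induction past $k=a$. Because $\Phi_k$ is only quadratic in the lower power sums, the first index at which a power sum that was left "free" can re-enter the recursion is $k=2a$, through the term $p_{2a}^{2}$; and since $\{k\ge1:4^{k}\equiv4n+2\pmod p\}=\{a,a+b,a+2b,\dots\}$ with $b=\operatorname{ord}_p(4)$, the indices that can carry a nonzero power sum are governed by $a$ and $b$ with period $b$. Translating the resulting pattern of forced vanishing $e_j$'s into the factorisation type of $Q(x)=\prod_{m_i\ne0}(x-m_i^{2})\in\mathbb{F}_p[x]$ should force $\deg Q=n-n_0$ to decompose as $a(x+1)+by$ for nonnegative $x,y$ (heuristically: one unavoidable block of "width" $a$ tied to the free power sum $p_{2a}$, $x$ further such blocks, and $y$ blocks of "width" $b$). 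I expect this last bookkeeping to be the crux: it needs a careful case analysis of $b$ relative to $a$ and to the truncation $2k<p-1$, and it may require an input beyond condition (1) alone — for instance that $T^{(2)},T^{(3)},\dots$ also satisfy (1)--(4), or the quadratic recursion $\chi^{2^{k+1}}(T)=\chi^{2^k}(T)^2-(2n-2)$ for a character $\chi$ of $G$ of order $p$ — in order to exclude configurations compatible with the power-sum congruences but not with an honest lattice tiling.
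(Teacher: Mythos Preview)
Your setup is the same as the paper's --- project to $\mathbb{Z}_p$, derive the recursion
\[
(-4^{k}+4n+2)\,S_{2k}\;+\;2\sum_{t=1}^{k-1}\binom{2k}{2t}S_{2t}S_{2(k-t)}\;\equiv\;0\pmod p\qquad(0<2k<p-1),
\]
and then use Newton's identities on the $m_i^{2}$. Your bound $n_0\le\lfloor\sqrt{(m-1)/2}\rfloor$ is correct.

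However, the proof is incomplete, and there is a concrete slip in the middle. From $e_1\equiv\cdots\equiv e_{a-1}\equiv0$ you only get $a-1<n-\ell$, i.e.\ $a\le n-\ell$; this is \emph{not} ``the instance $x=y=0$'' of $a(x+1)+by=n-\ell$, which would require $a=n-\ell$ exactly. So at that point nothing has been proved yet, and in your final paragraph you essentially concede that the decisive step is missing.

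The missing idea is much simpler than the case analysis you anticipate. Set
\[
X\;=\;\{\,ax+by:\ x\ge1,\ y\ge0\,\}.
\]
The crucial observation is that $X$ is closed under addition. Consequently, if $k_0\notin X$ then for every $0<t<k_0$ at least one of $t$ and $k_0-t$ lies outside $X$ (otherwise $k_0=t+(k_0-t)\in X$). One now proves by induction on $k$ that $S_{2k}\equiv0$ for every $k\notin X$ with $1\le k<(p-1)/2$: the quadratic sum $\sum S_{2t}S_{2(k-t)}$ vanishes termwise by the induction hypothesis, and the leading coefficient $-4^{k}+4n+2$ is a unit modulo $p$ because $\{k:p\mid 4^{k}-(4n+2)\}=\{a+by:y\ge0\}\subseteq X$. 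A second induction using Newton's identities (again exploiting closure of $X$) gives $e_k\equiv0$ for every $k\notin X$ with $1\le k\le n$. Since $e_{\,n-\ell}=\prod_{m_i\ne0}m_i^{2}\not\equiv0$, it follows that $n-\ell\in X$, i.e.\ $n-\ell=a(x{+}1)+by$ with $x,y\ge0$. No periodicity bookkeeping, no further use of $T^{(2)},T^{(3)},\dots$, and no character recursion are needed.
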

\begin{proof}
The proof is similar as \cite{K17}, for readers convenience, we keep the proof.

Let $G$ be a finite abelian group (written additively) with order $2n^2+1$, and $0$ be its identity element.
	By Theorem~\ref{tilinggroup}, there exists $T=\{t_i: i=1,\dots, n\}\subseteq G$ such that
	\[ \{0\}, \{\pm t_i:  i=1,\dots, n\}, \{\pm t_i\pm t_j: 1\leq i<j\leq n\}  \]
	form a partition of $G$.
	
	Let $H$ be a subgroup of $G$ of index $p$. Let $\rho:G \rightarrow G/H$ be the canonical homomorphism and $a_i=\rho(t_i)$. Then the multisets
	\[\{0\}, \{\pm a_i:  i=1,\dots, n\}, \{\pm a_i\pm a_j: 1\leq i<j\leq n\}  \]
	form a partition of $mG/H$.
	
Let $k$ be an integer. By calculation,
	\begin{align*}
	&\sum_{i=1}^{n}\left((a_i^{2k} + (-a_i)^{2k} \right)+\sum_{1\leq i < j\leq n}\left( (a_i+a_j)^{2k} + (a_i-a_j)^{2k} + (-a_i+a_j)^{2k} +(-a_i-a_j)^{2k} \right)\\
	=& (-4^{k} + 4n + 2)S_{2k} + 2\sum_{t=1}^{k-1}\binom{2k}{2t}S_{2t}S_{2(k-t)}
	\end{align*}
	where $S_t = \sum_{i=1}^{n}a_i^t$. Since this is also the sum of the $2k$-th powers of every element in $mG/H$,
	\begin{equation}\label{eq:kim_main}
	(-4^{k} + 4n + 2)S_{2k} + 2\sum_{t=1}^{k-1}\binom{2k}{2t}S_{2t}S_{2(k-t)}=
	\begin{cases}
	0,  & p-1 \nmid 2k,\\
	-m, & p-1 \mid 2k.
	\end{cases}
	\end{equation}
	
	Let $a$ and $b$ be the least positive integers satisfying $p\mid (-4^a+4n+2)$ and $p\mid (4^b-1)$. Define
	\[ X = \{ax+by : x\geq 1, y\geq 0\}. \]
	We prove the following two claims by induction on $k$.
	
	\textbf{Claim 1:} If $1\le k < \frac{p-1}{2}$ is not in $X$, then $S_{2k}=0$.
	
	Suppose that $S_{2k}=0$ for each $k\le k_0-1$ that is not in $X$. Assume that $k_0\notin X$. As $X$ is closed under addition, for any $t$, at least one of $t$ and $k_0-t$ is not in $X$.
	
	For any integer $k$, if $p\mid (-4^k+4n+2)$, then $k$ must be of the form $a+by$ whence $k\in X$. This implies that $p\nmid (-4^{k_0}+4n+2)$. By \eqref{eq:kim_main} and the induction hypothesis,
	\[  0=(-4^{k_0} + 4n + 2)S_{2k_0} + 2\sum_{t=1}^{k_0-1}\binom{2k_{0}}{2t}S_{2t}S_{2(k_0-t)}=(-4^{k_0} + 4n + 2)S_{2k_0}.\]
	Thus $S_{2k_0}= 0$.
	
	Let $e_k$ be the elementary symmetric polynomials with respect to $a_1^2$, $a_2^2$, $\cdots$, $a_n^2$.
	
	\textbf{Claim 2}: If $1\le k\le n< \frac{p-1}{2}$ is not in $X$, then $e_k=0$.
	
	Suppose that $e_k=0$ for each $k\leq k_0-1$ not in $X$ and $k_0\notin X$. As $X$ is closed under addition, for each $0<t<k_0$, at least one of $t$ and $k_0-t$ is not in $X$. By Claim 1 and the inductive hypothesis, $e_t=0$ or $S_{2(k_0-t)}=0$.
	Together with Newton identities on $a_1^2,\dots, a_n^2$, we have
	\[ k_0e_{k_0} = e_{k_0-1}S_2 + \dots + (-1)^{i+1} e_{k_0-i}S_{2i}+\dots +(-1)^{k_0-1}S_{2k_0}=(-1)^{k_0-1}S_{2k_0}=0.\]
	Therefore $e_{k_0}=0$.
	
 Suppose that $0$ appears $\ell$ times in $\rho(T)$, and let $a_{i_1}=a_{i_2}=\dots=a_{i_\ell}=0$, then $-a_{i_j}=0$ and $\pm a_{i_j}\pm a_{i_k}=0$ for $1\le j<k\le \ell$. This means that the number of $0$ in multiset $\rho(G)$ is at least $1+2\ell+4\binom{\ell}{2}=2\ell^{2}+1$.
 As $0$ appears exactly $m$ times in $mG/H$, then $2\ell^{2}+1\le m$, i.e., $\ell\le\sqrt{\frac{m-1}{2}}$. Note that $e_{n-\ell}$ is the product of those nonzero $a_i^2$'s, whence $e_{n-\ell}\neq 0$. By Claim 2, $n-\ell$ is in $X$. This completes the proof.
\end{proof}

Theorem~\ref{mainthm} is a combination of Propositions \ref{prop1}, \ref{prop2}, and \ref{prop3}---\ref{prop8}.

\bibliographystyle{IEEEtranS}

\end{document}